\newcommand{\grad}{\nabla}
\renewcommand{\L}{\mathcal{L}}
\DeclareMathOperator{\re}{Re}
\renewcommand{\Re}{\re}
\renewcommand{\tilde}[1]{\widetilde{#1}}
\renewcommand{\leq}{\leqslant}
\renewcommand{\geq}{\geqslant}
\DeclareMathOperator{\Vol}{Vol}
\DeclareMathOperator{\dVol}{dVol}
\DeclareMathOperator{\Div}{div}
\DeclareMathOperator{\Sca}{Sca}
\DeclareMathOperator{\Diff}{Diff}
\newcommand{\Mod}{\mbox{\rm Mod}}
\newcommand{\param}{{\mathchoice{\mkern1mu\mbox{\raise2.2pt\hbox{$\centerdot$}}\mkern1mu}{\mkern1mu\mbox{\raise2.2pt\hbox{$\centerdot$}}\mkern1mu}{\mkern1.5mu\centerdot\mkern1.5mu}{\mkern1.5mu\centerdot\mkern1.5mu}}}
\numberwithin{equation}{section}
\theoremstyle{plain}
\newtheorem{theorem}{Theorem}[section]
\newtheorem{lemma}[theorem]{Lemma}
\newtheorem{conjecture}[theorem]{Conjecture}
\theoremstyle{definition}
\theoremstyle{definition}
\newtheorem*{remarksenv}{Remarks}
\begin{document}

\date{}
\title[Scalar Curvature]
{Scalar curvatures of Hermitian metrics on the moduli space of Riemann surfaces}

\author{Yunhui Wu}

\address{Department of Mathematics\\
         Rice University\\
         Houston, Texas, 77005-1892\\}
\email{yw22@rice.edu}

\begin{abstract} 
In this article we show that any finite cover of the moduli space of closed Riemann surfaces of $g$ genus with $g\geq 2$ does not admit any complete finite-volume Hermitian metric of non-negative scalar curvature.

Moreover, we also show that the total mass of the scalar curvature of any almost Hermitian metric, which is equivalent to the Teichm\"uller metric, on any finite cover of the moduli space is negative provided that the scalar curvature is bounded from below.
\end{abstract}

\subjclass{30F60, 32G15}
\keywords{Moduli space, Scalar curvature, Teichm\"uller metric}

\maketitle

\section{Introduction}
Let $S_g$ be a closed surface of genus $g$ with $g\geq 2$, $\Mod(S_g)$ be the mapping class group and $T_{g}$ be the Teichm\"uller space of $S_g$. Topologically $T_{g}$  is a complex manifold of complex dimension $3g-3$, which carries various $\Mod(S_g)$-invariant metrics which descend into metrics on the moduli space $\mathbb{M}_g$ of $S_g$ with respective properties. For examples, the Teichm\"uller metric, Kobayashi metric and Carathe\'odory metric are complete and Finsler. The Weil-Petersson metric is K\"ahler \cite{Ahlfors}, incomplete \cite{Chu,  Wolpert1} and has negative sectional curvatures \cite{Wol86}. The Asymptotic Poincar\'e metric, Induced Bergman metric, K\"ahler-Einstein metric, McMullen metric, Ricci metric, and perturbed Ricci metric are complete and K\"ahler. In \cite{LSY04, LSY05, McM00, Yeu04}, the authors showed that the metrics listed above except the Weil-Petersson metric are equivalent. 

It is shown that the perturbed Ricci metric \cite{LSY04, LSY05} has pinched negative Ricci curvature. So does the scalar curvature of the perturbed Ricci metric. And the McMullen metric \cite{McM00} has negative scalar curvature at certain points since the metric, restricted on certain thick part of the moduli space, is the Weil-Petersson metric. However, in \cite{FW-Scalar} Farb and Weinberger show that any finite cover $M$ of the moduli space $\mathbb{M}_g$ $(g\geq 2)$ admits a complete finite-volume Riemannian metric of (uniformly bounded) positive scalar curvature. They also show that this metric is not quasi-isometric to the Teichm\"uller metric. And they conjecture (see Conjecture 4.6 in \cite{Farb-pro})  
\begin{conjecture}[Farb-Weinberger]\label{fwc}
Let $S_{g}$ be a surface of $g$ genus with $g\geq 2$. Then any finite cover $M$ of the moduli space $\mathbb{M}_{g}$ of $S_{g}$ does not admit a finite volume Riemannian metric of (uniformly bounded) positive scalar curvature in the quasi-isometry class of the Teichm\"uller metric.
\end{conjecture}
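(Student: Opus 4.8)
The statement is a purely large-scale (coarse) assertion: it fixes the quasi-isometry class of the metric and forbids uniformly positive scalar curvature within it, for an \emph{arbitrary} Riemannian $g$. The Chern/K\"ahler structure that drove the Hermitian arguments is therefore unavailable, and the natural machinery is coarse index theory. The plan is to attach to the Teichm\"uller coarse structure a $K$-theoretic index obstruction that (i) depends only on the quasi-isometry type of $(\mathbb{M}_g,d_T)$, where $d_T$ denotes the Teichm\"uller metric, and (ii) vanishes whenever the metric has a uniform positive lower bound on scalar curvature; the theorem then reduces to proving this obstruction is nonzero.

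First I would reduce to a convenient cover. Choosing a torsion-free finite-index subgroup $\Gamma<\Mod(S_g)$, the quotient $M=T_g/\Gamma$ is a smooth aspherical manifold of real dimension $n=6g-6$, a $K(\Gamma,1)$. If some finite cover of $\mathbb{M}_g$ carried such a metric, its pullback to a torsion-free finite-index cover would carry one as well, since a finite cover preserves the Teichm\"uller quasi-isometry type, finite volume, and the scalar-curvature bound; so it suffices to treat $M=T_g/\Gamma$. After possibly passing to a further finite cover I would arrange $M$ to be spin and orientable, and if the spin condition genuinely fails I would replace the spinor Dirac operator by a twisted/Clifford-linear version or work with the Rosenberg index in $K_n(C^*_r\Gamma)$. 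Fix then any complete, finite-volume Riemannian $g$ with $\mathrm{Scal}_g\geq\kappa_0>0$ that is quasi-isometric to $d_T$.

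Second, the Lichnerowicz formula $D_g^2=\nabla^*\nabla+\tfrac14\mathrm{Scal}_g$ produces a uniform spectral gap for the Dirac operator $D_g$, which forces its coarse index $\mathrm{ind}(D_g)$ to vanish in the $K$-theory $K_n(C^*(M,g))$ of the Roe algebra. Because the Roe algebra and this index class are coarse, hence quasi-isometry, invariants, one has $C^*(M,g)\cong C^*(M,d_T)$, and by the coarse index theorem $\mathrm{ind}(D_g)$ is the image under the coarse assembly map $\mu\colon K_n^{\mathrm{lf}}(M)\to K_n(C^*(M,d_T))$ of the fundamental class $[M]$; this image is independent of the particular $g$ in the quasi-isometry class. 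Thus uniform positivity of the scalar curvature would force $\mu([M])=0$.

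The decisive step, and the one I expect to be the main obstacle, is to show $\mu([M])\neq 0$ for the Teichm\"uller coarse structure. Here I would exploit the structure of the ends of $(\mathbb{M}_g,d_T)$: by the Masur--Minsky description the large-scale geometry is organized by the curve complex, and the ends are coarsely fibered over the boundary strata, which are themselves products of moduli spaces of lower-complexity bordered and lower-genus surfaces. I would set up a coarse Mayer--Vietoris / partitioned-manifold argument expressing $\mu([M])$ through the corresponding classes on these strata, reducing nonvanishing by induction on the complexity $3g-3$ to a base case of complexity one, where the relevant moduli space is essentially a surface and the class is detected by a Gauss--Bonnet / Euler-characteristic computation, consistent with the negativity of the total scalar curvature established above. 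The genuine difficulties are that the ends are not honest products but carry further degenerations, that the quasi-flats coming from Dehn-twist subgroups make $K_*(C^*(M,d_T))$ hard to compute, and that the finite-volume hypothesis must be used to exclude the Farb--Weinberger metric, which achieves uniform positive scalar curvature precisely by altering the coarse type at infinity, shrinking the ends and thereby leaving the Teichm\"uller quasi-isometry class. Controlling the ends well enough to certify $\mu([M])\neq 0$ is the crux on which the whole argument rests.
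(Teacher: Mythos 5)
Your proposal is a program, not a proof, and its decisive step is exactly the one you leave open. The vanishing half of your argument (Lichnerowicz gap $\Rightarrow$ coarse index of the Dirac operator dies in $K_n(C^*(M,g))$, plus quasi-isometry invariance of the Roe algebra) is standard and correct as far as it goes, but the nonvanishing half, $\mu([M])\neq 0$ for the Teichm\"uller coarse structure, is precisely the content of the conjecture in your framework, and your sketch of it is not viable as stated: the ends of $(\mathbb{M}_g,d_T)$ are not coarsely fibered products of lower-complexity moduli spaces in any sense strong enough to run a coarse Mayer--Vietoris induction (the thin parts mix Dehn-twist quasi-flats with lower-complexity directions in a way that has resisted $K$-theoretic computation), and no base-case Gauss--Bonnet detection is available in $K_*(C^*(\cdot,d_T))$. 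There is also a genuine flaw earlier in the reduction: passing to finite covers does not let you ``arrange $M$ to be spin'' --- since $M$ is aspherical, $w_2(M)$ lives in $H^2(\Gamma;\mathbb{Z}/2)$ and need not die in any finite-index subgroup --- and the fallback of twisting the Dirac operator destroys the argument, because the twisting curvature enters the Lichnerowicz formula and ruins the spectral gap unless the twist is flat. Finally, your remark that finite volume is needed to exclude the Farb--Weinberger metric is misplaced: their metric is excluded by the quasi-isometry hypothesis (they prove it is not quasi-isometric to $d_T$), and indeed your coarse scheme never uses finite volume at all, so you are implicitly attempting a strictly stronger statement than the conjecture without flagging the extra burden this imposes.

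For comparison, the paper does not prove the full Riemannian conjecture either; it establishes the Hermitian case (Theorem \ref{mt-2}) by a much softer analytic route: take the holomorphic identity map from $(M,\|\cdot\|)$ to $(M,\|\cdot\|_{LSY})$, where the perturbed Ricci metric has pinched negative Ricci curvature (Theorem \ref{lsy-05}); Chern's formula (Theorem \ref{chern-1}) then gives $\Delta v \geq 2v(k_3 + C_2(3g-3)v^{1/(3g-3)})$ for the ratio of volume elements $v$, the quasi-isometry hypothesis gives the uniform lower bound $v\geq k_1^{6g-6}$, and if the scalar curvature infimum $k_3$ were $\geq -K(k_1,g)$ one would get $\Delta v \geq c > 0$ everywhere, contradicting the constancy of $v$ forced by completeness and finite volume (via a gradient-flow volume argument or Yau's cutoff estimate). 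There, finite volume and the Hermitian/K\"ahler structure do real work; your coarse-index route is the natural attack on the general Riemannian statement, but until you can certify $\mu([M])\neq 0$ (and handle the spin obstruction honestly), it remains a reduction of one open problem to another.
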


Let $M$ be any finite cover of $\mathbb{M}_g$. The natural complex structure on the Teichm\"uller space descends into a complex structure on $M$. In this paper we will focus complete Hermitian metrics on $M$ with this complex structure. Our first result is  

\begin{theorem}\label{mt-1}
Let $S_{g}$ be a surface of $g$ genus with $g\geq 2$ and $M$ be a finite cover of the moduli space $\mathbb{M}_{g}$ of $S_{g}$. Then for any complete finite-volume Hermitian metric $||\cdot||$ on $M$, the scalar curvature $\Sca$ of $(M,||\cdot||)$ satisfies 
$$\inf_{p \in M} \Sca(p)<0.$$
\end{theorem}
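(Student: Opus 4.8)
The plan is to argue by contradiction, using the complex structure on $M$ as the essential ingredient: a Hermitian metric with everywhere nonnegative scalar curvature is incompatible with the fact that the moduli space is, in the complex-analytic sense, ``negatively curved,'' i.e. its canonical bundle is positive. First I would negate the conclusion and assume $\Sca \geq 0$ at every point of $M$. The point worth noting is that this negation automatically supplies a uniform lower bound $\Sca \geq 0$, so the integrations below will have a chance of being legitimate even though no separate boundedness hypothesis is imposed in this theorem.

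Next I would convert the Riemannian scalar curvature $\Sca$ into a quantity controlled by the first Chern class. Writing $\omega$ for the fundamental $(1,1)$-form of $\|\cdot\|$ and $n = 3g-3 = \dim_{\C} M$, recall that the Chern--Ricci form $\rho$ of \emph{any} Hermitian metric represents $2\pi c_1(M)$, and that the Riemannian scalar curvature $\Sca$ and the Chern scalar curvature $S_C = \tr_\omega \rho$ differ by terms built from the torsion of $\omega$ and the divergence of its Lee vector field, schematically
\[
\Sca = 2 S_C - |\tau|^2 + 2\,\Div(V), \qquad |\tau|^2 \geq 0.
\]
Thus $\Sca \geq 0$ forces $2 S_C \geq |\tau|^2 - 2\,\Div(V)$, and once the divergence term is integrated away this should yield $\int_M S_C\,\dVol \geq 0$; equivalently, the $\omega$-degree of the anticanonical bundle $-K_M$ is nonnegative.

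The opposing inequality comes from the geometry of moduli space. For $g \geq 2$ the canonical bundle of the Deligne--Mumford compactification $\overline{\mathbb{M}}_g$, pulled back to the finite cover, is big and nef; concretely the \wep class is, up to a positive constant, Wolpert's class $\kappa_1$, which is positive, so $c_1(M)$ pairs strictly negatively against any positive $(n-1)$-fold product of a Kähler-type class. I would use this to show that the (regularized) $\omega$-degree of $-K_M$ is strictly negative, contradicting the inequality of the previous step and completing the proof.

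The main obstacle is that $M$ is noncompact, so none of the integrations above is a priori legitimate: $\int_M S_C\,\dVol$ need not converge, Stokes' theorem can fail, and $\int_M \rho \wedge \omega^{n-1}$ is not a topological pairing unless $\omega$ is close to Gauduchon. I would handle this by exploiting the two standing hypotheses — completeness and finite volume — through a Gaffney--Karp type exhaustion: choose cutoff functions $\chi_k$ supported on an exhaustion of $M$ with $\int_M |\grad \chi_k|^2\,\dVol \to 0$ (available precisely because $(M,\omega)$ is complete and of finite volume, via logarithmic cutoffs across the cusp ends), and insert them before integrating by parts so that the divergence and torsion error terms vanish in the limit. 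Controlling the behavior of $\omega$ and of $\rho$ near the cusps of $\overline{\mathbb{M}}_g$, and making the degree of $-K_M$ rigorous for a merely Hermitian (non-Gauduchon, non-Kähler) $\omega$, is the delicate technical heart of the argument; the lower bound $\Sca \geq 0$ extracted in the first step is exactly what keeps the relevant error terms integrable through this limiting process.
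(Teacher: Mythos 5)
There is a genuine gap, and it sits exactly where you locate ``the delicate technical heart'': the opposing inequality is never proved, and it is doubtful it can be proved by cohomological means at all. On the noncompact $M$ the quantity you need, $\int_M \rho\wedge\omega^{n-1}$, is not a pairing of $c_1(M)$ with anything topological: $\omega^{n-1}$ is not closed for a general Hermitian $\omega$ (Gauduchon's normalization $\partial\overline{\partial}\,\omega^{n-1}=0$ is a compact-manifold device), and an arbitrary complete finite-volume Hermitian metric comes with no mass or decay control near the Deligne--Mumford boundary, so $\omega$ need not extend as a current to $\overline{\mathbb{M}}_g$, and bigness/nefness of classes on the compactification pairs with algebraic data, not with your metric. (As stated, the positivity input is also false for small genus: $\overline{\mathbb{M}}_2$ is rational, so $K_{\overline{\mathbb{M}}_g}$ is not big and nef in general; the correct positivity lives in the log-canonical class $K+\delta$, i.e.\ the orbifold $c_1$ of the open moduli space.) Your cutoff scheme does give $\int_M\|\grad\chi_k\|^2\dVol\to 0$ from completeness plus finite volume, but the boundary terms you must kill involve the Lee field and torsion of the arbitrary metric---terms like $\int_M\chi_k\langle\grad\chi_k,V\rangle\dVol$---and nothing supplies any $L^p$ bound on $V$ or $\tau$; the hypothesis $\Sca\geq 0$ yields no such integrability. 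Finally, even your schematic identity $\Sca=2S_C-|\tau|^2+2\Div(V)$ needs justification: the general comparison between Riemannian and Chern scalar curvature contains torsion and Lee-form contributions of both signs, and the sign-definiteness of the non-divergence term, on which your entire first step rests, is asserted rather than proved.

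The paper avoids all of these difficulties by never forming a degree. It imports the negativity of moduli space pointwise, through the Liu--Sun--Yau perturbed Ricci metric, whose Ricci curvature is pinched between two negative constants, and applies S.~S.~Chern's formula to the volume ratio $v$ of the holomorphic identity map $i:(M,\|\cdot\|)\to (M,\|\cdot\|_{LSY})$. Under the assumption $\Sca\geq 0$, Chern's formula together with AM--GM gives the pointwise inequality $\Delta v\geq 2C_2(3g-3)\,v^{(3g-2)/(3g-3)}>0$, and a flow/volume argument then shows that on a complete finite-volume manifold no function can have everywhere positive Laplacian: its gradient flow would transport a small ball through infinitely many pairwise disjoint images, each of volume at least that of the ball, contradicting finite volume. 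Completeness and finite volume are used exactly as hypothesized, with no control of $\omega$ at infinity required. If you want to salvage your outline, this pointwise maximum-principle-type mechanism---rather than a regularized Chern-class pairing---is the tool that actually closes the noncompactness gap.
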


As introduced before there is a list of canonical metrics which are equivalent (or quasi-isometric) to the Teichm\"uller metric $||\cdot||_T$ (see \cite{LSY04, LSY05, McM00, Yeu04}). Our second aim is the following uniform upper bound on the infimum of the scalar curvature of any Hermitian metric on a given class. 

\begin{theorem}\label{mt-2}
Let $S_{g}$ be a surface of $g$ genus with $g\geq 2$ and $M$ be a finite cover of the moduli space $\mathbb{M}_{g}$ of $S_{g}$. Given two constants $k_1, k_2>0$, then for any Hermitian metric $||\cdot||$ on $M$ with $  k_1 ||\cdot|| \leq ||\cdot||_T \leq k_2 ||\cdot||$, there exists a constant $K(k_1, g)>0$ only depending on $k_1$ and $g$ such that
the scalar curvature satisfies
$$\inf_{p \in (M,||\cdot||)} \Sca(p)\leq -K(k_1,g)<0.$$
\end{theorem}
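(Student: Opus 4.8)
The plan is to reduce Theorem~\ref{mt-2} to a single averaged (total-mass) estimate and then to extract the pointwise bound on $\inf\Sca$ from it. First, if $\inf_{p\in M}\Sca(p)=-\infty$ there is nothing to prove, so I may assume $\Sca$ is bounded from below; since $(M,||\cdot||)$ has finite volume this makes the negative part of $\Sca$ integrable. The elementary inequality
\[
\inf_{p\in M}\Sca(p)\cdot\Vol(M,||\cdot||)\;\leq\;\int_M\Sca\,\dVol
\]
then shows that it suffices to prove that the \emph{average} scalar curvature obeys $\int_M\Sca\,\dVol\leq -K(k_1,g)\,\Vol(M,||\cdot||)$ with $K$ depending only on $k_1$ and $g$.

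I would establish this averaged bound first for K\"ahler metrics, where the mechanism is transparent, and then treat the general Hermitian case perturbatively. For a K\"ahler metric $\omega$ equivalent to $||\cdot||_T$, the Chern--Weil identity gives
\[
\int_M\Sca\,\dVol=\frac{4\pi}{(n-1)!}\,\langle c_1(M),[\omega]^{n-1}\rangle,\qquad n=3g-3 .
\]
The key geometric input is that the canonical bundle of $\M_g$ is positive: concretely, represent $c_1(M)$ by $-\tfrac{1}{2\pi}\omega_{\mathrm{KE}}$, where $\omega_{\mathrm{KE}}$ is the K\"ahler--Einstein form normalized to $\mathrm{Ric}=-\omega_{\mathrm{KE}}$, which by \cite{LSY04,LSY05,McM00,Yeu04} is equivalent to $||\cdot||_T$ with a constant $a(g)$ depending only on $g$. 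Since $\omega^{n-1}$ is closed one may swap representatives and obtain $\int_M\Sca\,\dVol=-\tfrac{2}{(n-1)!}\int_M\omega_{\mathrm{KE}}\wedge\omega^{n-1}$. Now the hypothesis $k_1||\cdot||\leq||\cdot||_T$ together with $||\cdot||_T\leq a(g)\,||\cdot||_{\mathrm{KE}}$ yields $\omega\leq k_1^{-2}a(g)^2\,\omega_{\mathrm{KE}}$ as positive $(1,1)$-forms, hence the pointwise comparison of top forms $\omega_{\mathrm{KE}}\wedge\omega^{n-1}\geq \tfrac{k_1^2}{a(g)^2}\,\omega^{n}$. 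Integrating and using $\dVol=\omega^n/n!$ gives exactly $\int_M\Sca\,\dVol\leq -K(k_1,g)\,\Vol(M,||\cdot||)$ with $K(k_1,g)=\tfrac{2n\,k_1^2}{a(g)^2}$. Note that only the upper comparison constant $k_1$ enters, that the degree of the cover $M\to\M_g$ cancels because both sides are integrals over $M$ and $a(g)$ pulls back unchanged, and that $k_2$ plays no role—exactly as the statement predicts.

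The passage from this clean K\"ahler picture to an arbitrary Hermitian metric is where the real work lies, and I expect it to be the main obstacle. Two points must be handled. First, for the Riemannian scalar curvature of a non-K\"ahler metric one must pass through the Chern scalar curvature; these differ by intrinsic torsion terms, and I would show that after integration the torsion contributes with a sign that only strengthens the negativity, so that bounding the Chern total scalar curvature suffices. Second, for a non-K\"ahler $\omega$ the form $\omega^{n-1}$ is no longer closed, so the cohomological swap above acquires a correction $\int_M\phi\,i\partial\bar\partial(\omega^{n-1})$ with $\phi=-\log(\det\omega/\det\omega_{\mathrm{KE}})$; controlling this term is delicate precisely because bi-Lipschitz bounds do not control second derivatives, and indeed without a lower bound on $\Sca$ one can inflate $\int_M\Sca\,\dVol$ by conformal oscillations. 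This is why the reduction to the $\Sca$-bounded-below case in the first paragraph is essential: it rules out such oscillations and, together with the Teichm\"uller equivalence, lets one estimate both the correction term and the boundary contributions at the Deligne--Mumford compactification, where $||\cdot||_T$ and $\omega_{\mathrm{KE}}$ have standard cusp asymptotics. Carrying out these two steps with constants depending only on $k_1$ and $g$ is the crux; once done, the averaged bound of the second paragraph, and hence the theorem, follows.
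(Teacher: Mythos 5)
Your opening reduction is sound: granting finite volume (which does follow from $k_1||\cdot||\leq ||\cdot||_T$ as in Lemma \ref{4-1}), the inequality $\inf_M \Sca \cdot \Vol(M) \leq \int_M \Sca\, \dVol$ legitimately reduces the theorem to an averaged estimate, and your K\"ahler computation correctly isolates why only $k_1$ and a genus constant should enter. But there are two genuine gaps. First, even in the K\"ahler model case the ``swap of representatives'' is an application of Stokes' theorem on a \emph{noncompact} manifold: the two representatives of $c_1$ differ by $i\partial\overline{\partial}\phi$ where $\phi$ is the log of a ratio of volume forms, and while the bi-Lipschitz hypothesis bounds $\phi$ itself (this is exactly the two-sided bound $k_1^{6g-6}\leq v\leq k_2^{6g-6}$ that the paper extracts), it gives no control whatsoever on $\grad\phi$, so $\int_M i\partial\overline{\partial}\phi\wedge\omega^{n-1}$ has no a priori reason to vanish; one needs a cutoff argument with a Yau-type gradient estimate on the complete finite-volume manifold, and you have not supplied it. Second, and more seriously, the general Hermitian case---which is the actual content of the theorem---is deferred rather than proved: your claim that the torsion terms relating the Riemannian scalar curvature to the Chern scalar curvature ``contribute with a sign that only strengthens the negativity'' is asserted without argument and is false in general (Gauduchon-type identities produce torsion terms of both signs), and you yourself concede that the correction $\int_M\phi\, i\partial\overline{\partial}(\omega^{n-1})$ is ``the crux'' without a mechanism to control it. As written, the proposal establishes the theorem at most for K\"ahler metrics, modulo the unjustified Stokes step.

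For contrast, the paper's route avoids cohomology and the K\"ahler hypothesis entirely: it takes the holomorphic identity map $i:(M,||\cdot||)\rightarrow (M,||\cdot||_{LSY})$ to the perturbed Ricci metric, applies Chern's pointwise formula (Theorem \ref{chern-1}) to the volume ratio $v$, and combines $Ric_{||\cdot||_{LSY}}\leq -C_2$ (Theorem \ref{lsy-05}) with $v\geq k_1^{6g-6}$ and AM-GM to show that $\inf_M\Sca \geq -K(k_1,g)$ would force $\Delta v\geq C_2(3g-3)k_1^{6g-4}>0$ everywhere, which is impossible on a complete finite-volume manifold (ruled out either by a volume-increasing flow argument or by the very Yau cutoff estimate your Stokes step is missing). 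Note also that Chern's formula involves the Hermitian scalar curvature directly, so the paper never has to confront the Riemannian-versus-Chern discrepancy that your torsion step stumbles on. If you want to salvage your averaged-estimate strategy, integrating Chern's formula (or its Goldberg--Har\'el refinement, equation (\ref{chern-2}), as the paper does for Theorem \ref{mt-3}) is the natural replacement for the Chern--Weil pairing: it converts your cohomological swap into a statement about $\Delta\log v$ with $v$ bounded, where Yau's Theorem \ref{yau-2} supplies exactly the noncompact Stokes control you need.
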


Theorem \ref{mt-2} confirms Conjecture \ref{fwc} for the Hermitian case. For the general Riemannian case, we have been told by Prof. Farb in \cite{Farb-p-c} that Conjecture \ref{fwc} is confirmed by him and Weinberger in a forthcoming paper. And their method is completely different from the method in this article. \\

Both Theorem \ref{mt-1} and Theorem \ref{mt-2} tell that the scalar curvature of any Hermitian metric, which is equivalent to the Teichm\"uller metric, on the moduli space is negative at certain points. It is natural to ask \textsl{whether the total mass of the scalar curvature could be positive}. Our last result tells that this is impossible if we assume the metric is almost Hermitian and its scalar curvature has a low bound.

\begin{theorem}\label{mt-3}
Let $S_{g}$ be a surface of $g$ genus with $g\geq 2$ and $M$ be a finite cover of the moduli space $\mathbb{M}_{g}$ of $S_{g}$. Let 
$||\cdot|$ be an almost Hermitian metric on $M$ satisfying $||\cdot|| \approx ||\cdot||_T $. Assume that the scalar curvature of $(M,||\cdot||)$ is bounded from below, then the total mass of the scalar curvature satisfies
$$\int_{p \in (M,||\cdot||)} \Sca(p)\dVol(p)<0.$$
\end{theorem}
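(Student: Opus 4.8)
The plan is to pin down the \emph{sign} of the total scalar curvature by the same mechanism that makes it negative in the Kähler case, namely the positivity of the (log-)canonical bundle of moduli space, and then to absorb the almost Hermitian and non-compact features into controlled error terms. Throughout write $n=3g-3$ for the complex dimension, let $J$ be the almost complex structure underlying $\|\cdot\|$, let $g$ denote the Riemannian metric and $\omega(\cdot,\cdot)=g(J\cdot,\cdot)$ its fundamental two-form. Since any two metric-compatible almost complex structures are homotopic, $c_1(M,J)=c_1(T_M)$ is the usual first Chern class, and the essential topological input is that its pairing against a positive class is negative: by the ampleness of $\kappa_1$ on $\overline{\M}_g$ (equivalently, the existence on $\M_g$ of a complete finite-volume K\"ahler--Einstein metric with $\mathrm{Ric}<0$), the log-canonical class is positive, so $\langle c_1(T_M),[\omega_{WP}]^{\,n-1}\rangle<0$.

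First I would record the pointwise Chern--Weil/Weitzenb\"ock identity on an almost Hermitian manifold comparing the Riemannian scalar curvature with the Hermitian (Chern) scalar curvature. Let $\rho$ be the Ricci form of the canonical Hermitian connection of $(g,J)$; it is closed with $[\rho]=2\pi c_1(T_M)$, and its $\omega$-trace is the Hermitian scalar curvature $s_{H}=\tr_\omega\rho$. The standard identity takes the shape
\[
\Sca \;=\; 2\,s_{H} \;+\; \Div(V) \;-\; Q,
\]
where $V$ is an explicit one-jet of the torsion and $Q\geq 0$ is a non-negative quadratic in $\nabla\omega$ and the Nijenhuis tensor, vanishing exactly in the K\"ahler case; in particular $\Sca\leq 2\,s_{H}+\Div(V)$ pointwise. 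This is where the upgrade from the Hermitian Theorems \ref{mt-1}--\ref{mt-2} to the almost Hermitian setting of Theorem \ref{mt-3} enters: the non-integrability only helps the sign through $-Q$.

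Next I would integrate against $\dVol=\omega^n/n!$ over an exhaustion $M_1\subset M_2\subset\cdots$ of $M$ by compacta whose complements are Deligne--Mumford cusp neighborhoods. The equivalence $\|\cdot\|\approx\|\cdot\|_T$ and the finiteness of the Teichm\"uller volume give $\Vol(M,g)<\infty$; combined with the lower bound $\Sca\geq -C$ this both guarantees that $\int_M\Sca\,\dVol$ is well defined and not $-\infty$, and legitimizes the divergence theorem on each $M_k$. The boundary integrals of $\Div(V)$ are then estimated on the cusp neighborhoods, where the quasi-isometry with $\|\cdot\|_T$ bounds the size of $V$ and the decay of $\dVol$, so they tend to $0$ along the exhaustion. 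This reduces matters to the inequality
\[
\int_M \Sca\,\dVol \;\leq\; \frac{2}{(n-1)!}\int_M \rho\wedge\omega^{\,n-1}.
\]

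It remains to control the right-hand side and fix its sign. Since $\rho$ is closed and represents $2\pi c_1(T_M)$, I would compare $\omega^{\,n-1}$ with the closed Weil--Petersson form $\omega_{WP}^{\,n-1}$: the leading term $\int_M\rho\wedge\omega_{WP}^{\,n-1}=2\pi\langle c_1(T_M),[\omega_{WP}]^{\,n-1}\rangle$ is topological and negative by the first paragraph, while the difference $\int_M\rho\wedge(\omega^{\,n-1}-\omega_{WP}^{\,n-1})$ must be shown to be finite and dominated by the leading term, using the two-sided bound $\|\cdot\|\approx\|\cdot\|_T$ to compare $\omega$ with $\omega_{WP}$ and the lower curvature bound to control $\rho$. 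The hardest part will be precisely this last step together with the cusp analysis: because $\omega$ is not closed in the almost Hermitian case, $\int_M\rho\wedge\omega^{\,n-1}$ is not literally a topological number, so the whole force of the finite-volume bound, the lower bound on $\Sca$, and the quasi-isometry with the Teichm\"uller metric must be combined to show that this integral is nevertheless finite and governed by the negative intersection number $\langle c_1(T_M),[\omega_{WP}]^{\,n-1}\rangle$ rather than spoiled by the non-integrability or by the behavior at the Deligne--Mumford boundary.
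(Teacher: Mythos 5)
There are genuine gaps here, and they cluster exactly at the points you flag as ``the hardest part,'' which is where the theorem actually lives. First, the pointwise identity $\Sca = 2s_H + \Div(V) - Q$ with a \emph{sign-definite} $Q \geq 0$ vanishing exactly in the K\"ahler case is not a standard fact and is not true in the generality you need: in the known comparison formulas between the Riemannian and Chern scalar curvatures of an almost Hermitian manifold (Gauduchon, Liu--Yang), the torsion and Nijenhuis terms enter with \emph{both} signs, and the nearly K\"ahler $S^6$ shows the strategy cannot work as stated --- there $\Sca > 0$ and $c_1 = 0$, so if one could reduce $\int \Sca$ to $2\pi\langle c_1, [\omega]^{n-1}\rangle$ minus a nonnegative quantity, one would get a contradiction; the reduction fails precisely because $\omega$ is not closed, which is the almost Hermitian case your argument must handle. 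Second, the step ``using the two-sided bound $\|\cdot\| \approx \|\cdot\|_T$ to compare $\omega$ with $\omega_{WP}$'' is unavailable: as the paper itself notes, $\|\cdot\|_{WP} \prec \|\cdot\|_T$ but the two are \emph{not} equivalent (Weil--Petersson is incomplete, Teichm\"uller is complete), so you get only a one-sided comparison and the ``difference term'' $\int_M \rho \wedge (\omega^{n-1} - \omega_{WP}^{n-1})$ cannot be dominated this way. Third, the boundary/cusp analysis is unsupported: metric equivalence is a zeroth-order statement and gives no bound whatsoever on $\nabla\omega$, hence none on the torsion one-jet $V$ or on $\rho$ (a lower bound on $\Sca$ does not control the Ricci form), so the vanishing of the boundary integrals of $\Div(V)$ along the exhaustion is an unproven claim. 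Even the topological input needs repair: on the Deligne--Mumford compactification the relevant positive class for the complete finite-volume setting is the log-canonical class $13\lambda - \delta$, not $-c_1 = 13\lambda - 2\delta$, and the sign of $(13\lambda - 2\delta)\cdot \kappa_1^{n-1}$ is a nontrivial intersection computation you do not supply.

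The paper's route avoids all of this and you should compare it: instead of Chern--Weil theory for the unknown metric, it forms the volume-element ratio $v = \dVol_{LSY}/\dVol_{\|\cdot\|}$ against the perturbed Ricci metric of Liu--Sun--Yau, whose Ricci curvature is pinched negative, and invokes the Chern--Lu formula in the almost Hermitian refinement of Goldberg--Har\'el, which gives the clean identity $\Delta \log v = 2\bigl(\Sca - \sum a_{\alpha k}\overline{a}_{\beta k}\tilde{Ric}_{\alpha\overline{\beta}}\bigr)$ with no torsion error terms at all. Since $\|\cdot\| \approx \|\cdot\|_T \approx \|\cdot\|_{LSY}$, the ratio $v$ is pinched between positive constants and $\int v^n \dVol < \infty$; Yau's integral theorem (Theorem 1 of \cite{Yau76}) then forces $\int_M \Delta\log v \, \dVol \leq 0$ unless $v$ is constant, and in either case the pinched negative Ricci bound $\tilde{Ric} \leq -C_2$ plus the AM--GM inequality converts this into $\int_M \Sca \, \dVol < 0$. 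The key structural difference is that the paper compares the metric to an auxiliary \emph{K\"ahler} metric with known curvature, so no cohomological pairing, no exhaustion, and no control of $d\omega$ is ever needed; your proposal tries to extract the sign from topology of $M$ itself, which is exactly what the non-closedness of $\omega$ obstructs.
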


Theorem \ref{mt-3} applies to the Asymptotic Poincar\'e metric, Induced Bergman metric, McMullen metric and Ricci metric, which is new. Actually it also applies to any metric in the the convex hull of the K\"ahler-Einstein metric, perturbed Ricci metric and the four metrics above.\\

It is \textsl{interesting} to know \textsl{whether there exists a Hermitian metric $||\cdot||$ on a finite cover $M$ of  $\mathbb{M}_g$ such that $||\cdot||$ is equivalent to the Teichm\"uller metric and $(M,||\cdot||)$ has non-negative scalar curvature outside some compact subset of $M$.} We hope the method in this paper is helpful for this question.
\quad 

\subsection{Plan of the paper.} In section 2 we review some recent developments on the canonical metrics on the moduli space of surfaces and recall one formula of S. S. Chern which is crucial for this article. In section \ref{sec-3} we establish Theorem \ref{mt-1}. Theorem \ref{mt-2} is proved in section \ref{sec-4}. And we will finish the proof of Theorem \ref{mt-3} in section \ref{sec-5}. 

\section{Notations and Preliminaries}\label{np} 

\subsection{Surfaces}
Let $S_g$ be a closed surface of $g$ genus with $g\geq 2$, and $\textsl{M}_{-1}$ denote the space of Riemannian metrics with constant curvature $-1$, and $X=(S_g,\sigma|dz|^2)$ be an element in $\textsl{M}_{-1}$. The group $\Diff_0$ of diffeomorphisms of $S_g$ isotopic to the identity, acts by pull-back on $\textsl{M}_{-1}$. The Teichm\"uller space $T_{g}$ of $S_g$  is defined by the quotient space
\begin{equation}
\nonumber T_g=M_{-1}/\Diff_0.  
\end{equation}

Let $\Diff_+$ be the group of orientation-preserving diffeomorphisms of $S_g$. The mapping class group $\Mod(S_g)$ is defined as
\begin{equation}
\nonumber \Mod(S_g)=\Diff_+/\Diff_0.  
\end{equation}

The moduli space $\mathbb{M}_g$ of $S_g$ is defined by the quotient space
\begin{equation}
\nonumber \mathbb{M}_g=T_g/\Mod(S_g).  
\end{equation}

The Teichm\"uller space has a natural complex structure, and its holomorphic cotangent space $T_{X}^{*}T_{g}$ is identified with the \textsl{quadratic differentials} 
$$QD(X)=\{\phi(z)dz^2\}$$ 
while its holomorphic tangent space is identified with the \textsl{harmonic Beltrami differentials} 
$$HBD(X)=\{\frac{\overline{\phi(z)}}{\sigma(z)}\frac{d\overline{z}}{dz}\}.$$ 

Recall that the \textsl{Teichm\"uller metric} $||\cdot||_T$ on $T_g$ is defined as
$$||\frac{\overline{\phi(z)}}{\sigma(z)}||_T:=\sup_{\psi dz^2 \in QD(X), \ \ \int_{X}|\psi|=1} \Re  \int_X {\frac{\overline{\phi(z)}}{\sigma(z)} \cdot \psi(z)} \frac{dz\wedge d\overline{z}}{-2\textbf{i}}.$$

 The \textit{Weil-Petersson metric} $||\cdot||_{WP}$ is the Hermitian
metric on $T_{g}$ arising from the the \textsl{Petersson scalar  product}
\begin{equation}
 <\varphi,\psi>= \int_X \frac{\varphi(z) \cdot \overline{\psi(z)}}{\sigma(z)}\frac{dz\wedge d\overline{z}}{-2\textbf{i}} \nonumber
\end{equation}
via duality.

Both the Teichm\"uller metric and the Weil-Petersson metric are $\Mod(S_g)$-invariant.\\

Let $||\cdot||_1$ and $||\cdot||_2$ be any two metrics on $T_g$. We call $||\cdot||_1$ is \textsl{controlled above} by $||\cdot||_2$ if there exists a positive constant $k$ such that 
$$||\cdot||_1 \leq k ||\cdot||_2$$
which is denoted by $||\cdot||_1 \prec ||\cdot||_2$. 

The Cauchy-Schwarz inequality and the Gauss-Bonnet formula tell us that 

$$||\cdot||_{WP} \prec ||\cdot||_T.$$

We call the two metrics $||\cdot||_1$ and $||\cdot||_2$ are $\textit{equivalent}$ (or $\textit{quasi-isometric}$) if 
$$||\cdot||_1 \prec ||\cdot||_2 \quad \textit{and} \quad ||\cdot||_2 \prec ||\cdot||_1.$$  We denote it by $||\cdot||_1 \approx ||\cdot||_2$.

It is not hard to see that $||\cdot||_{WP}$ is not equivalent to $||\cdot||_{T}$ because the Weil-Petersson metric is incomplete and the Teichm\"uller metric is complete.  

\subsection{K\"ahler metrics on $\mathbb{M}_g$} In this subsection we briefly review some properties of the following three K\"ahler metrics $\mathbb{M}_g$: the McMullen metric, the Ricci metric, and the perturbed Ricci metric. They will be applied to prove Theorem \ref{mt-2} in section \ref{sec-4}. 
\subsubsection{McMullen metric} In \cite{McM00} McMullen constructed a new metric $||\cdot||_{M}$ on $\mathbb{M}_g$, called the \textsl{McMullen metric}. More precisely, let $Log: \mathbb{R}_{+}\rightarrow [0,\infty)$ be a smooth function such that\\ 
$(1). \ Log(x)=\log(x) \ \  if \ \ x \geq 2;$\\
$(2). \ Log(x)=0 \ \  if \ \ x \leq 1.$\\

For suitable choices of small constants $\epsilon, \delta>0$, the K\"ahler form of the McMullen metric is
$$\omega_M=\omega_{WP}-\textbf{i}\delta \sum_{\ell_{\gamma}(X)<\epsilon}\partial \overline{\partial} Log \frac{\epsilon}{\ell_{\gamma}}$$
where the sum is taken over primitive short geodesics $\gamma$ on $X$. Restricted on certain thick part of $\mathbb{M}_g$ the McMullen metric is exactly the Weil-Petersson metric. McMullen in \cite{McM00} proved that this metric is K\"ahler hyperbolic and has bounded geometry. He also showed that  

\begin{theorem}[McMullen]\label{McM-00}
On the moduli space $\mathbb{M}_g$, $||\cdot||_{M} \approx ||\cdot||_T.$
\end{theorem}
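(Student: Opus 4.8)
The plan is to reduce the global comparison $||\cdot||_M\approx||\cdot||_T$ to a local computation in the thin parts of moduli space, where the asymptotic geometry of both metrics is dictated by the short geodesic-length functions. First I would note that both $||\cdot||_M$ and $||\cdot||_T$ are $\Mod(S_g)$-invariant and continuous on $T_g$; since the $\epsilon$-thick part of $\mathbb{M}_g$ is compact by Mumford's criterion, on that region the two metrics are automatically two-sidedly comparable with uniform constants. Hence the entire content of the statement lies in the thin part, and by the collar lemma it suffices to work in a neighborhood where a fixed collection of disjoint simple closed geodesics $\gamma_1,\dots,\gamma_k$ are short, treating each factor separately.

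On such a thin part I would bring in two external inputs. For the Teichm\"uller metric I would use Minsky's Product Regions Theorem, which identifies the thin part, up to bounded distortion, with a product of upper half-plane factors $\mathbb{H}_{\gamma_i}$ (one per short curve) times a lower-dimensional Teichm\"uller space, where each $\mathbb{H}_{\gamma_i}$ carries its hyperbolic metric in a complex coordinate $w_{\gamma_i}$ with $\Im w_{\gamma_i}\asymp 1/\ell_{\gamma_i}$ and real part the normalized twist. For the Weil--Petersson metric and for $\ell_\gamma$ itself I would use Wolpert's precise asymptotic expansions: $\langle\grad\ell_\gamma,\grad\ell_\gamma\rangle_{WP}\asymp\ell_\gamma$ together with the matching control on the complex Hessian $\partial\overline\partial\ell_\gamma$ as $\ell_\gamma\to 0$. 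In the coordinate $w_\gamma$ these give $\partial\ell_\gamma\wedge\overline\partial\ell_\gamma\asymp \ell_\gamma^{4}\,dw_\gamma\wedge d\overline w_\gamma$ and $\partial\overline\partial\ell_\gamma\asymp \ell_\gamma^{3}\,dw_\gamma\wedge d\overline w_\gamma$, and they also show that $\omega_{WP}$ contributes only at order $\ell_\gamma^{3}$ in the factor direction while matching $||\cdot||_T$ in the thick directions.

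The heart of the argument is then the computation of the correction term in $\omega_M=\omega_{WP}-\mathbf{i}\delta\sum_{\ell_\gamma<\epsilon}\partial\overline\partial\,\mathrm{Log}\frac{\epsilon}{\ell_\gamma}$. On the region where $\ell_\gamma\le\epsilon/2$ one has $\mathrm{Log}(\epsilon/\ell_\gamma)=\log\epsilon-\log\ell_\gamma$, so
$$-\mathbf{i}\,\partial\overline\partial\,\mathrm{Log}\frac{\epsilon}{\ell_\gamma}=\mathbf{i}\,\partial\overline\partial\log\ell_\gamma=\mathbf{i}\left(\frac{\partial\overline\partial\ell_\gamma}{\ell_\gamma}-\frac{\partial\ell_\gamma\wedge\overline\partial\ell_\gamma}{\ell_\gamma^{2}}\right).$$
Inserting Wolpert's expansions, \emph{both} terms on the right are of the same order $\ell_\gamma^{2}\,dw_\gamma\wedge d\overline w_\gamma$, and their difference is positive and a uniform multiple of the hyperbolic form $\mathbf{i}\,dw_\gamma\wedge d\overline w_\gamma/(\Im w_\gamma)^{2}$ supplied by Minsky's theorem, i.e.\ of the Teichm\"uller factor metric. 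Since $\omega_{WP}$ is of strictly lower order in this factor direction and comparable to $||\cdot||_T$ in the thick directions, choosing $\delta,\epsilon$ small makes $\omega_M$ a genuine metric that is two-sidedly comparable to $||\cdot||_T$ on the thin part; combined with the compact thick-part estimate this yields $||\cdot||_M\approx||\cdot||_T$ on all of $\mathbb{M}_g$.

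The main obstacle is the delicate same-order cancellation in the displayed identity: the positivity and the correct size of $\partial\overline\partial\log\ell_\gamma$ do not follow from orders of magnitude alone, since the leading coefficients of $\partial\overline\partial\ell_\gamma/\ell_\gamma$ and $\partial\ell_\gamma\wedge\overline\partial\ell_\gamma/\ell_\gamma^{2}$ are comparable, so one genuinely needs Wolpert's sharp expansions to see that the difference is positive and comparable to the hyperbolic factor metric rather than degenerate. Coupled to this is the requirement of uniformity: one must control the cross terms between the pinching, twisting, and thick directions, and between distinct short curves $\gamma_i$, with constants independent of how small the $\ell_{\gamma_i}$ become and of the point in $\mathbb{M}_g$ (using that the number of short curves is bounded by $3g-3$), and verify that $\omega_M$ stays positive definite after the smoothing of $\mathrm{Log}$ in the transition region $1\le\epsilon/\ell_\gamma\le 2$, which is what forces $\delta$ to be chosen small.
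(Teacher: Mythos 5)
This statement is not proved in the paper at all: Theorem \ref{McM-00} is quoted as an external input, with the proof attributed to McMullen's paper \cite{McM00}, so there is no internal argument to compare yours against line by line. What you have written is, in outline, a faithful reconstruction of McMullen's actual proof: the reduction to the thin part via $\Mod(S_g)$-invariance and Mumford compactness, the use of Minsky's product regions theorem to model $||\cdot||_T$ on the thin part by hyperbolic half-plane factors with $\Im w_\gamma \asymp 1/\ell_\gamma$, and the computation showing $\mathbf{i}\,\partial\overline\partial\log\ell_\gamma$ is a positive form of hyperbolic size $\asymp \ell_\gamma^2\, dw_\gamma\wedge d\overline{w}_\gamma$ in the factor direction; your numerology ($\partial\ell\wedge\overline\partial\ell \asymp \ell^4$, $\partial\overline\partial\ell \asymp \ell^3$, leading coefficients of the two terms in $\partial\overline\partial\ell/\ell - \partial\ell\wedge\overline\partial\ell/\ell^2$ comparable with positive difference, as the model $\ell = 1/\Im w$ confirms) is right, and you correctly identify this same-order cancellation as the crux rather than something that follows from orders of magnitude. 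The one substantive divergence from \cite{McM00} is your source for the Hessian control: McMullen did not have sharp expansions of $\partial\overline\partial\ell_\gamma$ available and instead bounded the complex Hessian of $\log(1/\ell_\gamma)$ in the Teichm\"uller metric via his quasifuchsian reciprocity theorem, which is the main new tool of that paper; the Wolpert expansions you invoke were established only later, so your route is a legitimate alternative with modern inputs but is anachronistic as a reading of McMullen's argument. Beyond that, your sketch leaves the cross-term estimates between distinct short curves and the thick directions, and the positivity of $\omega_M$ through the smoothing region of $\mathrm{Log}$, as stated obligations rather than verified steps --- which you flag honestly, and which is acceptable for a result the paper itself only cites.
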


\subsubsection{Ricci metric and perturbed Ricci metric}

In \cite {Tromba, Wol86} it is shown that the Weil-Petersson metric has negative sectional curvature. The negative Ricci curvature tensor defines a new metric $||\cdot||_{\tau}$ which is called the $\textit{Ricci metric}$ on $\mathbb{M}_g$. Trapani in \cite{Trapani92} proved $||\cdot||_{\tau}$ is a complete K\"ahler metric.

In \cite{LSY04} Liu-Sun-Yau perturbed the Ricci metric along the Weil-Petersson direction to give new metrics on $\mathbb{M}_g$ which are called the \textsl{perturbed Ricci metrics} denoted by $||\cdot||_{LSY}$. More precisely, let $\omega_{\tau}$ be the K\"ahler form of the Ricci metric, for any constant $C>0$, the K\"ahler form of the perturbed Ricci metric is

$$\omega_{LSY}=\omega_{\tau}+C\cdot \omega_{WP}.$$

In \cite{LSY04} the authors showed that both $||\cdot||_{\tau}$ and $||\cdot||_{LSY}$ have bounded geometry. By using Yau's generalized Schwarz Lemma \cite{Yau-sl-78} they also showed that
\begin{theorem}[Liu-Sun-Yau]\label{lsy-04}
On the moduli space $\mathbb{M}_g$, 
$$||\cdot||_{LSY} \approx ||\cdot||_{\tau} \approx  ||\cdot||_M.$$
\end{theorem}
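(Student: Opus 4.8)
The plan is to prove both equivalences by applying Yau's generalized Schwarz lemma \cite{Yau-sl-78} to the identity map of $\mathbb{M}_g$, run in both directions, supplemented by the elementary inequality built into the definition of $\omega_{LSY}$ and by the boundary asymptotics of the metrics. The form of the Schwarz lemma I would use is: if $f\colon(M,g_1)\to(N,g_2)$ is holomorphic, $(M,g_1)$ is complete K\"ahler with $\mathrm{Ric}(g_1)\geq -K_1$, and $(N,g_2)$ is K\"ahler with holomorphic sectional curvature bounded above by $-K_2<0$, then $f^*g_2\leq (K_1/K_2)\,g_1$. Taking $M=N=\mathbb{M}_g$ and $f=\id$ converts curvature hypotheses into a comparison $g_2\prec g_1$: to obtain $g_2\prec g_1$ one uses $g_1$ as source (needing completeness and a Ricci lower bound) and $g_2$ as target (needing a negative upper bound on holomorphic sectional curvature).

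First I would dispose of $||\cdot||_\tau\approx||\cdot||_{LSY}$. One direction is free: since $\omega_{LSY}=\omega_\tau+C\,\omega_{WP}$ with $C>0$ and $\omega_{WP}\geq 0$, we have $\omega_{LSY}\geq\omega_\tau$, hence $||\cdot||_\tau\prec||\cdot||_{LSY}$. For the reverse I would apply the Schwarz lemma with source $(\mathbb{M}_g,\omega_\tau)$ and target $(\mathbb{M}_g,\omega_{LSY})$: the Ricci metric is complete \cite{Trapani92} with bounded geometry \cite{LSY04}, so its Ricci curvature is bounded below, while the perturbed Ricci metric has holomorphic sectional curvature bounded above by a negative constant; this yields $||\cdot||_{LSY}\prec||\cdot||_\tau$.

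For $||\cdot||_\tau\approx||\cdot||_M$ I would use the Schwarz lemma where it applies and McMullen's Theorem \ref{McM-00} together with transitivity of $\approx$ where it does not. The direction $||\cdot||_{LSY}\prec||\cdot||_M$ (hence $||\cdot||_\tau\prec||\cdot||_M$ after the previous step) follows from the Schwarz lemma with source $(\mathbb{M}_g,\omega_M)$, which is complete with bounded geometry \cite{McM00} and so has Ricci bounded below, and target $(\mathbb{M}_g,\omega_{LSY})$. The opposite direction is the genuinely hard one: the McMullen metric is not known to have pointwise negative holomorphic sectional curvature, so it cannot serve as a Schwarz-lemma target, and the Teichm\"uller metric is only Finsler, so it cannot serve as a source. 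I would instead reduce $||\cdot||_M\prec||\cdot||_{LSY}$ to the single comparison $||\cdot||_T\prec||\cdot||_\tau$, using $||\cdot||_M\prec||\cdot||_T$ from Theorem \ref{McM-00}, and then establish $||\cdot||_T\prec||\cdot||_\tau$ directly from the boundary asymptotics.

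The main obstacle is precisely this last estimate, together with the two curvature inputs the Schwarz lemma consumes: that the Ricci and McMullen metrics are complete with bounded geometry, and that the perturbed Ricci metric has holomorphic sectional curvature bounded above by a negative constant. All of these rest on the asymptotic analysis of $\omega_\tau$, $\omega_{WP}$, and $\omega_M$ near the boundary of the Deligne-Mumford compactification in plumbing coordinates: in a pinching direction the Ricci and Teichm\"uller/McMullen metrics blow up to the same order, one power of $\log$ faster than the Weil-Petersson metric, which simultaneously gives $||\cdot||_{WP}\prec||\cdot||_\tau$ and $||\cdot||_T\prec||\cdot||_\tau$, while compactness of the thick part controls everything there. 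Carrying out these degenerate-coordinate curvature computations is where essentially all of the work lies; once they are in hand, the equivalences follow formally from the Schwarz lemma and the transitivity of $\approx$.
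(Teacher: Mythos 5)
This theorem is not proved in the paper at all --- it is quoted from Liu--Sun--Yau \cite{LSY04, LSY05}, with the paper recording only that the proof uses bounded geometry plus Yau's generalized Schwarz lemma, and your proposal reconstructs essentially that same route: the free inequality $\omega_{LSY}=\omega_\tau+C\omega_{WP}\geq\omega_\tau$, the Schwarz--Yau lemma with $\omega_{LSY}$ (pinched negative holomorphic sectional curvature) as target and the complete, bounded-geometry metrics $\omega_\tau$, $\omega_M$ as sources, McMullen's equivalence $||\cdot||_M\approx||\cdot||_T$, and plumbing-coordinate asymptotics for the one direction the Schwarz lemma cannot reach. The only caveat worth recording is attributional: the version of the Schwarz lemma you state, with merely an upper bound on the holomorphic \emph{sectional} curvature of the target, is Royden's refinement of Yau's lemma (Yau's original hypothesis \cite{Yau-sl-78} is on holomorphic bisectional curvature), and this refined form is the one Liu--Sun--Yau actually invoke.
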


Furthermore, in one of their subsequent papers \cite{LSY05} they showed that

\begin{theorem}[Liu-Sun-Yau]\label{lsy-05}
With a suitable choice of the constant $C$, there exists two positive numbers $C_1, C_2$ such that the Ricci curvature of the perturbed Ricci metric $||\cdot||_{LSY}$ satisfies
$$-C_1\leq Ric_{||\cdot||_{LSY}}\leq -C_2<0.$$ 
\end{theorem}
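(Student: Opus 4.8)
The plan is to reduce the statement to explicit curvature computations in a Weil--Petersson orthonormal frame and then to control everything uniformly up to the Deligne--Mumford boundary. First I would fix $X\in\mathbb{M}_g$ and choose harmonic Beltrami differentials $\mu_1,\dots,\mu_{3g-3}$ forming a unitary frame for $||\cdot||_{WP}$. Writing $f_{i\bar j}$ for the function $\mu_i\overline{\mu_j}$ on $X$ and letting $L=-\Delta\geq 0$ be the (non-negative) Laplace operator of the hyperbolic metric $\sigma$, the curvature formula of Wolpert and Tromba \cite{Wol86, Tromba} reads
$$R^{WP}_{i\bar jk\bar l}=\int_X(L+2)^{-1}(f_{i\bar j})\,f_{k\bar l}\,dA+\int_X(L+2)^{-1}(f_{i\bar l})\,f_{k\bar j}\,dA.$$
Since $L+2$ has spectrum bounded below by $2$, the operator $(L+2)^{-1}$ is positive and self-adjoint, which forces the Ricci curvature of $||\cdot||_{WP}$ to be negative; hence $\tau_{i\bar j}:=-Ric^{WP}_{i\bar j}$ is positive definite and defines the Ricci metric $||\cdot||_\tau$.

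Next I would differentiate $\tau_{i\bar j}$ twice in the holomorphic and anti-holomorphic directions to obtain the full curvature tensor $R^\tau_{i\bar jk\bar l}$ of the Ricci metric. This computation is heavy because one must commute covariant derivatives past the resolvent $(L+2)^{-1}$; each commutator produces lower-order terms built from $R^{WP}$ and from the variation of $\sigma$ under the deformation, and the result is an exact expression for $R^\tau$ as a finite sum of integrals against iterated resolvents. On any fixed compact part of $\mathbb{M}_g$ these integrals are uniformly bounded above and below, so on the thick part the Ricci curvature of $||\cdot||_\tau$ is already pinched and no perturbation is needed. Following \cite{LSY04} I would then record the bounded-geometry conclusion: all curvatures of $||\cdot||_\tau$ (and likewise of $||\cdot||_{WP}$) are bounded on all of $\mathbb{M}_g$, which in particular yields a uniform lower bound $Ric_{||\cdot||_\tau}\geq -C_1'$.

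The genuine difficulty is the boundary behavior, and this is where the perturbation enters. Near a stratum where $m$ geodesics $\gamma_1,\dots,\gamma_m$ are pinched I would pass to plumbing coordinates $(t_1,\dots,t_m;s_1,\dots,s_{3g-3-m})$ and substitute the asymptotic expansions of $\sigma$, of $\omega_{WP}$, and of the resolvent in terms of the short lengths $\ell_j\sim -2\pi^2/\log|t_j|$ into the formula for $R^\tau$. The outcome of this bookkeeping \cite{LSY04} is that the curvature of $||\cdot||_\tau$ stays bounded, but that $Ric_{||\cdot||_\tau}$ fails to be \emph{negatively} pinched: certain second derivatives of the short lengths contribute a non-negative term, so the required upper bound $Ric_{||\cdot||_\tau}\leq -C_2<0$ can break down in directions near the boundary. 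The role of $C\,\omega_{WP}$ is to repair exactly this defect, since the Weil--Petersson metric has strictly negative Ricci curvature by the displayed formula, so adding a large multiple of $\omega_{WP}$ injects a definite negative contribution into $Ric$ precisely where $||\cdot||_\tau$ degenerates.

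Finally I would estimate the Ricci form of $\omega_{LSY}=\omega_\tau+C\,\omega_{WP}$ directly. Since Ricci curvature is not additive under sums of Kähler metrics, I would use the general formula for $Ric$ of a sum together with the a priori bounds above: the mixed and error terms are controlled by the uniform boundedness of the geometry of both $||\cdot||_\tau$ and $||\cdot||_{WP}$, while the leading diagonal terms are strictly negative. Choosing $C=C(g)$ large enough then yields simultaneously the upper bound $Ric_{||\cdot||_{LSY}}\leq -C_2<0$ (from the dominant $C\,\omega_{WP}$ contribution) and the lower bound $Ric_{||\cdot||_{LSY}}\geq -C_1$ (from bounded geometry). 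I expect the main obstacle to be the boundary asymptotics: one must track the precise order in each $\ell_j$ of every entry of $R^\tau$ and $R^{WP}$ --- including the off-diagonal mixing between the pinching directions $t_j$ and the surface directions $s_k$ --- accurately enough to certify that the negative leading terms survive uniformly after the perturbation.
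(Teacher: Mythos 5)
You are reconstructing a theorem that this paper does not prove at all: it is imported verbatim from Liu--Sun--Yau \cite{LSY05} (with the construction of $\tau$ and $\tilde\tau=\tau+C\,g_{WP}$ from \cite{LSY04}), so the only fair comparison is with their argument. Your sketch has the right raw ingredients --- the Wolpert--Tromba resolvent formula for the Weil--Petersson curvature, the definition $\tau_{i\bar j}=-Ric^{WP}_{i\bar j}$, differentiation of $\tau_{i\bar j}$ past $(L+2)^{-1}$, and plumbing-coordinate asymptotics --- but it inverts the actual logic at the crucial step. You claim that on the thick part the Ricci curvature of $\|\cdot\|_\tau$ is ``already pinched and no perturbation is needed,'' while the defect sits at the boundary and is repaired by $C\,\omega_{WP}$. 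This is backwards, and the first half is a non sequitur: compactness of the thick part gives two-sided \emph{bounds} on $R^\tau$ but no sign whatsoever. In \cite{LSY04, LSY05} the asymptotic analysis shows that the curvature of the Ricci metric is dominated by \emph{negative} leading terms precisely in the degenerating directions near the Deligne--Mumford boundary, whereas its sign in the interior cannot be controlled; the role of the large multiple $C\,\omega_{WP}$ is to force negativity on the interior (where $\omega_{WP}$ has uniformly negative curvature upper bounds, after Wolpert), while one then checks the perturbation does not disturb the boundary asymptotics. A proof organized as you propose would stall exactly where you declare victory: nothing in the resolvent formula makes $Ric_{\|\cdot\|_\tau}$ negative at an arbitrary interior point.

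The second genuine gap is your final step, which appeals to ``the uniform boundedness of the geometry of both $\|\cdot\|_\tau$ and $\|\cdot\|_{WP}$'' to control the mixed terms in the curvature of the sum. The Weil--Petersson metric does not have bounded geometry: it is incomplete and its sectional and holomorphic sectional curvatures blow up near the boundary (in a pinching direction the holomorphic sectional curvature degenerates like a negative multiple of $1/\ell$), so the error terms in the curvature of $\omega_\tau+C\,\omega_{WP}$ cannot be dismissed this way. What Liu--Sun--Yau actually do is derive a closed formula for the full curvature tensor of $\tilde\tau$ in terms of the curvature data of $\tau$ and of $g_{WP}$, and then re-run the entire boundary asymptotic expansion for the combined metric, tracking the order in each $\ell_j$ of every term --- the step you correctly flag as the main obstacle but then bypass. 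A small additional point: with your convention that $(L+2)^{-1}$ is a positive operator, the Wolpert--Tromba formula needs an overall minus sign; as displayed, your formula would make the curvature pairing positive, so the sentence deducing negativity of $Ric^{WP}$ from positivity of the resolvent does not cohere as written.
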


Moreover, they also showed in \cite{LSY05} that the perturbed Ricci metric $||\cdot||_{LSY}$ has negatively pinched holomorphic sectional curvatures, which is known to be the first complete metric on the moduli space with this property. And they use this property and the Schwarz-Yau lemma to prove a list of canonical metrics on the moduli space are equivalent.

\subsection{The ratio of intermediate volume elements}\label{2.3}
In this subsection we briefly review a formula of Chern in \cite{Chern-68-ratio} which is crucial for this article. For the notations and computations one can also refer to \cite{Lu-68} for more details.

Let $M$ and $N$ be two $2n$-dimensional Hermitian manifolds and $f:M\rightarrow N$ be a holomorphic mapping. Let $\{\theta_i\}_{1\leq i \leq n}, \{\omega_{\alpha}\}_{1\leq \alpha \leq n}$ be the unitary coframe fields of $M$ and $N$ respectively. There exists complex numbers $a_{\alpha i}$ such that 
\begin{equation}
f^*\omega_\alpha=\sum_{i=1}^{n} a_{\alpha i} \theta_i.
\end{equation}

Then

\begin{equation}\label{2-1}
f^*(\omega_\alpha \wedge \overline{\omega}_\alpha )=\sum_{i=1}^{n} \sum_{j=1}^{n}a_{\alpha i}\overline{a}_{\alpha j} \theta_i \wedge \overline{\theta}_j.
\end{equation}

By raising equation (\ref{2-1}) to the $n^{th}$ power, the \textsl{ratio of intermediate volume elements} $v$ is defined as  

\begin{equation}\label{2-2}
v:=\frac{f^*(\sum_{\alpha=1}^{n}\omega_\alpha \wedge \overline{\omega}_\alpha )^n}{(\sum_{i=1}^{n}\theta_i \wedge \overline{\theta}_i )^n}.
\end{equation}

Linear algebra gives that 

\begin{equation}\label{2-3}
v=\frac{f^{*}(\dVol_N)}{\dVol_M}=D \cdot \overline{D} 
\end{equation}
where
\begin{equation}\label{2-4}
D=\det(a_{\alpha \beta}).
\end{equation}

Let $\Delta$ be the Laplace operator of $M$. Now we are ready to state Chern's formula.

\begin{theorem}[Chern]\label{chern-1} 

\[\frac{\Delta v}{4}=\sum_{k=1}^{n}D_k \cdot \overline{D_k}+\frac{v}{2}(\Sca-\sum_{1\leq \alpha, \beta, k \leq n } a_{\alpha k} \overline{a}_{\beta k} \tilde{Ric}_{a\overline{\beta}})\]
\end{theorem}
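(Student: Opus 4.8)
The conceptual skeleton is that $v=|D|^2$ is the squared pointwise norm of the Jacobian section $D=\det(a_{\alpha i})$, which lives in the line bundle $K_M\otimes f^*K_N^{-1}$; since $f$ is holomorphic, $D$ is holomorphic up to the Hermitian torsion of the two metrics, so Chern's formula is essentially a Bochner-type identity for this section, with the curvature of $K_M$ producing $\Sca$ and the curvature of $f^*K_N^{-1}$ producing the pulled-back Ricci term. The plan is to realize this identity explicitly by the method of unitary moving frames.

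First I would write down the structure equations on both manifolds: on $M$, unitary connection forms $\theta_{ij}$ with $\theta_{ij}+\overline{\theta_{ji}}=0$ and curvature $\Theta_{ij}=d\theta_{ij}-\sum_k\theta_{ik}\wedge\theta_{kj}$, and analogously $\omega_{\alpha\beta},\Omega_{\alpha\beta}$ on $N$; the scalar curvature $\Sca$ and the Ricci tensor $\tilde{Ric}_{\alpha\bar\beta}$ are then read off as the relevant traces of $\Theta$ and $\Omega$. Differentiating the defining relation $f^*\omega_\alpha=\sum_i a_{\alpha i}\theta_i$ and matching bidegrees yields the covariant differential of $a_{\alpha i}$, namely $Da_{\alpha i}=da_{\alpha i}+\sum_\beta (f^*\omega_{\alpha\beta})a_{\beta i}-\sum_j a_{\alpha j}\theta_{ji}=\sum_j a_{\alpha i j}\theta_j+\sum_j a_{\alpha i\bar j}\overline{\theta_j}$, where holomorphicity of $f$ forces the antiholomorphic coefficients $a_{\alpha i\bar j}$ to be governed by torsion alone (and to vanish in the K\"ahler case).

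Next I would apply Jacobi's formula to $D=\det(a_{\alpha i})$. The connection-form contributions assemble into the traces $\sum_\alpha f^*\omega_{\alpha\alpha}$ and $\sum_i\theta_{ii}$, which are exactly the phase ambiguity of $D$ and cancel from $v=D\overline{D}$; what survives is $\partial D=\sum_k D_k\,\theta_k$ with well-defined covariant derivatives $D_k$. The final step is to trace $\partial\overline{\partial}(D\overline{D})$: the mixed terms give the gradient square $\sum_k D_k\overline{D_k}$, while differentiating $\partial D$ once more and commuting covariant derivatives through the Ricci identity brings down $\Theta$ and $f^*\Omega$; tracing the latter against $a_{\alpha k}\overline{a}_{\beta k}$ produces $\Sca-\sum a_{\alpha k}\overline{a}_{\beta k}\tilde{Ric}_{\alpha\bar\beta}$, and converting the $\partial\overline{\partial}$-trace into the real Laplacian supplies the constants $\tfrac14$ and $\tfrac12$.

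The hard part will be the honest bookkeeping in the Hermitian (non-K\"ahler) regime: tracking the torsion terms in $Da_{\alpha i}$ and verifying that they do not survive into the final trace, fixing precisely which of the several Hermitian Ricci tensors the symbol $\tilde{Ric}_{\alpha\bar\beta}$ denotes, and nailing the normalization constants. I expect the determinant-trace cancellation and the curvature-commutation step to be the two places where sign or normalization slips are likeliest.
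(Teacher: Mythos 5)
Your outline is essentially the paper's proof: the paper gives no argument of its own for this theorem, deferring entirely to Chern \cite{Chern-68-ratio} and Corollary 4.4 of \cite{Lu-68}, and your moving-frames plan --- structure equations on both manifolds, covariant differentiation of $f^*\omega_\alpha=\sum_i a_{\alpha i}\theta_i$, Jacobi's formula yielding $\partial D=\sum_k D_k\,\theta_k$ after the trace of the connection forms cancels from $v=D\overline{D}$, then a Bochner-type trace of $\partial\overline{\partial}(D\overline{D})$ producing $\Sca$ and the pulled-back Ricci term --- is precisely Chern's derivation in those sources. One simplification for the bookkeeping you flag as the hard part: because the Chern connection matrix in a unitary frame is of type $(1,0)$ and $df$ is a holomorphic section of $T^{*}M\otimes f^{*}TN$, the antiholomorphic coefficients $a_{\alpha i \bar j}$ vanish identically even in the non-K\"ahler case (not merely up to torsion), and the torsion instead enters through the commutation identities and through the fact that the $\Delta$ and $\Sca$ in the formula are the complex (Chern) Laplacian and Chern scalar curvature, which coincide with the Riemannian ones only in the K\"ahler setting.
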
 
where $\sum_{k=1}^{n}D_k \cdot \overline{D_k}$ is a nonnegative function on $M$, $\Sca$ is the scalar curvature of $M$ and $ \tilde{Ric}_{a\overline{\beta}}$ is the Ricci tensor of $N$. For the the proof of Theorem \ref{chern-1} one can refer to \cite{Chern-68-ratio} (or Corollary 4.4 in \cite{Lu-68}) for details. \\

\section{Proofs of Theorem \ref{mt-1}}\label{sec-3}
In this section we will prove Theorem \ref{mt-1}.

Let $M$ be any finite cover of the moduli space $\mathbb{M}_g$. If necessary we take a finite cover of $M$ again, still denoted by $M$, such that $M$ is a manifold. We lift the perturbed Ricci metric $||\cdot||_{LSY}$  in Theorem \ref{lsy-05} onto $M$. Let $||\cdot||$ be a complete finite-volume Hermitian metric on $M$. We consider the identity map

$$i:(M, ||\cdot||) \rightarrow (M, ||\cdot||_{LSY}).$$  

It is clear that $i$ is holomorphic. We let $v$ be the ratio of intermediate volume elements for $i$ as in equation (\ref{2-2}).

\begin{lemma}\label{3-1}
For any $p \in (M, ||\cdot||)$,

$$v(p)>0.$$

\end{lemma}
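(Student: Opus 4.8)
The plan is to show that the ratio of intermediate volume elements $v$ for the identity map $i\colon (M,||\cdot||)\to (M,||\cdot||_{LSY})$ is strictly positive everywhere. Recall from equations (\ref{2-3}) and (\ref{2-4}) that $v=D\cdot\overline{D}$ with $D=\det(a_{\alpha\beta})$, where the $a_{\alpha\beta}$ are the coefficients expressing the pullback $i^*\omega_\alpha=\sum_\beta a_{\alpha\beta}\theta_\beta$ of the unitary coframe of $(M,||\cdot||_{LSY})$ in terms of the unitary coframe of $(M,||\cdot||)$. Since $v=|D|^2\geq 0$ automatically, the content of the lemma is precisely the nonvanishing $D\neq 0$ at every point, equivalently that the matrix $(a_{\alpha\beta})$ is invertible.

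The key observation is that $i$ is the \emph{identity} map of the underlying complex manifold $M$, not merely some holomorphic map. First I would note that at each point $p$, the differential $di_p$ is the identity on the holomorphic tangent space $T_p^{1,0}M$, which is in particular a linear isomorphism. The matrix $(a_{\alpha\beta})$ is exactly the matrix of this differential read off in the two unitary coframes, i.e.\ it is the change-of-basis matrix between a $||\cdot||$-unitary frame and a $||\cdot||_{LSY}$-unitary frame of the \emph{same} tangent space $T_p^{1,0}M$. A change-of-basis matrix between two bases of a finite-dimensional vector space is always invertible, so $D=\det(a_{\alpha\beta})\neq 0$ and hence $v(p)=|D|^2>0$.

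Concretely, one can argue as follows. Both $||\cdot||$ and $||\cdot||_{LSY}$ are genuine Hermitian metrics, so each admits a unitary coframe in a neighborhood of $p$; let $\{\theta_i\}$ and $\{\omega_\alpha\}$ be these for $||\cdot||$ and $||\cdot||_{LSY}$ respectively. Because $i=\id$, the pulled-back forms $i^*\omega_\alpha$ are just the $\omega_\alpha$ regarded as covectors on $M$, and $\{\omega_\alpha\}$ is itself a basis of the holomorphic cotangent space $T_p^{*}M$ (a unitary coframe is in particular a coframe). Expressing one basis in terms of the other via $i^*\omega_\alpha=\sum_\beta a_{\alpha\beta}\theta_\beta$ therefore produces an invertible transition matrix, so $D\neq 0$.

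I do not expect any serious obstacle here; this is essentially a linear-algebra observation, and the only point requiring a small amount of care is making sure the $a_{\alpha\beta}$ really are the components of a nondegenerate transition between two coframes of the same space rather than of a possibly degenerate holomorphic map. That nondegeneracy is guaranteed precisely because $i$ is the identity (hence a biholomorphism with everywhere-bijective differential), so the potential degeneration that Chern's setup allows for a general holomorphic map simply cannot occur. This strict positivity is what will later let us divide by $v$ and exploit Theorem \ref{chern-1}, since the term $\sum_k D_k\overline{D_k}$ and the factor $v$ appearing there are only meaningful away from the zero locus of $v$.
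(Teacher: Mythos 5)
Your proof is correct and follows essentially the same route as the paper, which simply observes that $v(p)>0$ because $i$ is biholomorphic; you have merely unpacked that observation into the explicit linear-algebra statement that $(a_{\alpha\beta})$ is an invertible transition matrix between two coframes of the same space, so $D=\det(a_{\alpha\beta})\neq 0$ and $v=|D|^2>0$.
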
 

\begin{proof}
For any $p \in (M, ||\cdot||)$, $v(p)>0$ follows from the fact that $i:(M, ||\cdot||) \rightarrow (M, ||\cdot||_{LSY})$ is biholomorphic.
\end{proof}

Now we are ready to prove Theorem \ref{mt-1}. 

\begin{proof}[Proof of Theorem \ref{mt-1}]
Assume that $\inf_{p \in (M, ||\cdot||)} \Sca(p) \geq 0$. We will argue it by getting a contradiction. Let $\Delta$ be the Laplace operator on $(M, ||\cdot||)$.\\

\textsl{Claim 1: $\Delta v(q) >0$ for any $q \in (M, ||\cdot||)$.}  \\

\begin{proof}[Proof of Claim 1] First by Theorem \ref{lsy-05} we know that there exists a constant $C_2>0$ such that 
\begin{equation}\label{3-1-1}
Ric_{||\cdot||_{LSY}}\leq -C_2<0.
\end{equation}

From Lemma \ref{3-1} we know that $v>0$. Since $\inf_{p \in (M, ||\cdot||)} \Sca(p) \geq 0$, by Theorem \ref{chern-1} we have
\begin{eqnarray}\label{3-1-2}
\Delta v &\geq& -2v (\sum_{1\leq \alpha, \beta, k \leq n } a_{\alpha k} \overline{a}_{\beta k} \tilde{Ric}_{a\overline{\beta}}).
\end{eqnarray}

Since $v>0$, inequalities (\ref{3-1-1}) and (\ref{3-1-2}) lead to

\begin{eqnarray}
\Delta v &\geq& 2 C_2 v \cdot (\sum_{1\leq \alpha,k \leq n } |a_{\alpha k}|^2)\\
&\geq&  2 C_2 (3g-3) v^{\frac{3g-2}{3g-3}} \quad \text{(by the AM-GM inequality)}\\
&>&0.
\end{eqnarray}
\end{proof}

The remaining argument is inspired by the proof of Theorem 9.1 in \cite{Canary93} and section 8.12 in \cite{Thursbook}. We remark that it is not known that $v$ is bounded from above, so the following Claim (2) can not directly follow from the results in \cite{Yau76}.\\

\textsl{Claim 2: $v$ is a constant on $(M,||\cdot||)$.} 

\begin{proof}[Proof of Claim 2] 
Let $g_t$ denote the flow generated by the vector field $\grad v$. Since $(M,||\cdot||)$ is complete, $g_t$ is defined for all $t\geq 0$.  

Assume that $v$ is not a constant and let $p_0 \in M$ such that $\grad v(p_0)\neq 0$. Along the flow line of $g_t$ starting at $p_0$, $v$ is increasing since for all $s_2>s_1\geq 0$,
\begin{eqnarray}\label{3-2-1}
v(g_{s_2}(p_0))-v(g_{s_1}(p_0))=\int_{s_1}^{s_2}||\grad v (g_t(p_0))||dt \geq 0.
\end{eqnarray} 
That is 
\begin{eqnarray}
v(g_{s_2}(p_0))\geq v(g_{s_1}(p_0)) \quad \forall s_2>s_1\geq 0.
\end{eqnarray} 

Since we assume that $\grad v(p_0)\neq 0$, let $s_2=1$ and $s_1=0$ we have
\begin{eqnarray}
v(g_{1}(p_0))> v(p_0)>0.
\end{eqnarray} 

Therefore there exists a small enough constant $r_0>0$ such that 
\begin{eqnarray}
\inf_{q \in B(p_0,r_0)} v(g_{1}(q))> \sup_{q \in B(p_0,r_0)}v(q)
\end{eqnarray} 
where $B(p_0,r_0)$ is the geodesic ball centered at $p_0$ of radius $r_0$. 

In particular we have 
\begin{eqnarray}\label{3-2-2}
B(p_0,r_0) \cap g_{1}(B(p_0,r_0))=\emptyset.
\end{eqnarray}  

Inequality (\ref{3-2-1}) and equation (\ref{3-2-2}) give that
\begin{eqnarray}\label{3-2-3}
B(p_0,r_0) \cap g_{n}(B(p_0,r_0))=\emptyset \quad \forall n\in \mathbb{Z}^{+}.
\end{eqnarray}  

Which also implies
\begin{equation}\label{3-3-3}
g_{n }(B(p_0,r_0)) \cap g_{m}(B(p_0,r_0))=\emptyset \quad \forall n \neq m \in \mathbb{Z}^+.
\end{equation}
Otherwise there exist two positive integers $n_0 > m_0\geq 1$ and $q_1, q_2 \in B(p_0,r_0)$ such that $g_{n_0}(q_1)=g_{m_0}(q_2)$. Since $g_t$ is a flow, $g_{n_0-m_0}(q_1)=q_2$ which contradicts equation (\ref{3-2-3}).\\

On the other hand, for any $t_0>0$ (we use Proposition 18.18 in \cite{Lee-manifold}), we have 
\begin{eqnarray}
\frac{d \Vol(g_t(B(p_0,r_0)))}{dt}|_{t=t_0}&=& \int_{B(p_0,r_0)} \frac{d}{dt}|_{t=t_0} g_t^*(\dVol) \\
&=& \int_{B(p_0,r_0)} g_{t_0}^*(\L_{\grad v}(\dVol)) \\
&=&\int_{B(p_0,r_0)} g_{t_0}^*(\Div( \grad (v))\dVol)\\
&=& \int_{g_{t_0}(B(p_0,r_0))} \Delta v  \dVol.
\end{eqnarray}
From Claim (1) we have
\begin{eqnarray}\label{3-2-4}
\frac{d \Vol(g_t(B(p_0,r_0)))}{dt}|_{t=t_0}>0 \quad \forall t_0>0.
\end{eqnarray}
That is the flow $g_t$ is volume increasing. 

Thus, equation (\ref{3-3-3}) and inequality (\ref{3-2-4}) give that 

\begin{eqnarray}
\Vol((M,||\cdot||))&\geq& \Vol(\cup_{k=1}^{\infty}g_{k }(B(p_0,r_0)))\\
&=&\sum_{k=1}^{\infty} \Vol (g_{k }(B(p_0,r_0)))\\
&\geq &  \sum_{k=1}^{\infty} \Vol (B(p_0,r_0))\\
&=& \infty
\end{eqnarray}
which contradicts our assumption that $(M,||\cdot||)$ has finite volume.
\end{proof}

It is clear that Claim (1) and Claim (2) can not simultaneously hold. Therefore, the proof is completed.

\end{proof}

\section{Proofs of Theorem \ref{mt-2}}\label{sec-4}

As the same in section \ref{sec-3} we let $M$ be any finite cover of the moduli space $\mathbb{M}_g$ and $||\cdot||_{LSY}$ be the perturbed Ricci metric in Theorem \ref{lsy-05}.  
Let $k_1, k_2$ be two positive constants and $||\cdot||$ be a Hermitian metric on $M$ with
 $$ k_1 ||\cdot|| \leq ||\cdot||_T \leq k_2 ||\cdot|| .$$

From Theorem \ref{McM-00} and Theorem \ref{lsy-04}, up to some uniform constants, we may assume that
\begin{equation}\label{eq-4-1}
k_1 ||\cdot|| \leq ||\cdot||_{LSY} \leq k_2 ||\cdot|| .
\end{equation}

Use the same notations in section \ref{sec-3} we let $v$ be the ratio of intermediate volume elements for $i$ as in equation (\ref{2-2}).

\begin{lemma}\label{4-1}
For any $p \in (M, ||\cdot||)$, we have
$$v(p)\geq k_1^{6g-6}>0.$$
In particular, $(M, ||\cdot||)$  has finite volume.

\end{lemma}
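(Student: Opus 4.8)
The plan is to read off the inequality directly from the factorization $v = D\cdot\overline{D} = |\det(a_{\alpha i})|^2$ in equations (\ref{2-3})--(\ref{2-4}), and to control the matrix $A = (a_{\alpha i})$ using only the \emph{lower} comparison $k_1\|\cdot\| \leq \|\cdot\|_{LSY}$ from (\ref{eq-4-1}) (the upper bound plays no role here).

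First I would fix a point $p$ and choose unitary coframes $\{\theta_i\}$ for $(M,\|\cdot\|)$ and $\{\omega_\alpha\}$ for $(M,\|\cdot\|_{LSY})$ as in Section \ref{2.3}, so that $i^*\omega_\alpha = \sum_i a_{\alpha i}\theta_i$. Passing to the dual unitary frames $\{e_i\}$, $\{f_\alpha\}$ of the holomorphic tangent spaces, the differential of the identity map is $di(e_i) = \sum_\alpha a_{\alpha i} f_\alpha$, so that for a tangent vector $X = \sum_i X_i e_i$ one has $\|X\|^2 = \sum_i |X_i|^2$ and $\|di(X)\|_{LSY}^2 = \sum_\alpha \bigl|\sum_i a_{\alpha i} X_i\bigr|^2 = |AX|^2$, where $A = (a_{\alpha i})$ and $|\cdot|$ denotes the standard Hermitian norm on $\C^n$ with $n = 3g-3$. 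Since $i$ is the identity, $di(X) = X$, and the lower bound in (\ref{eq-4-1}) reads $k_1|X| \leq |AX|$ for every $X \in \C^n$.

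The heart of the argument is then purely linear-algebraic: the estimate $|AX| \geq k_1|X|$ for all $X$ says precisely that the smallest singular value of $A$ is at least $k_1$. Writing $\sigma_1 \geq \cdots \geq \sigma_n \geq k_1$ for the singular values of $A$, I would conclude
$$|\det A| = \prod_{i=1}^{n}\sigma_i \geq k_1^n = k_1^{3g-3},$$
and hence $v(p) = |\det A|^2 \geq k_1^{6g-6} > 0$, as claimed. I expect the only delicate point to be keeping the frame conventions and the direction of the inequality straight, so that the \emph{lower} bound on $\|\cdot\|_{LSY}$ is correctly converted into a \emph{lower} bound on the singular values, and thus on $v$; the singular-value estimate itself is immediate.

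For the final assertion, recall that $v = \dVol_{LSY}/\dVol_{\|\cdot\|}$, so that pointwise $\dVol_{\|\cdot\|} = v^{-1}\dVol_{LSY} \leq k_1^{-(6g-6)}\dVol_{LSY}$. Since the perturbed Ricci metric $\|\cdot\|_{LSY}$ is equivalent to the Teichm\"uller metric and the moduli space (hence its finite cover $M$) has finite volume in this class, integration yields
$$\Vol(M,\|\cdot\|) = \int_M \dVol_{\|\cdot\|} \leq k_1^{-(6g-6)}\,\Vol(M,\|\cdot\|_{LSY}) < \infty,$$
so $(M,\|\cdot\|)$ has finite volume. The main obstacle is really just the careful bookkeeping of the second paragraph; once the comparison is read correctly as a singular-value bound, everything else is routine.
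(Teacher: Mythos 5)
Your proposal is correct and follows essentially the same route as the paper: the paper's proof is exactly the volume-form comparison $k_1^{6g-6}\dVol_{||\cdot||} \leq \dVol_{||\cdot||_{LSY}}$ obtained by ``linear algebra'' from $k_1||\cdot|| \leq ||\cdot||_{LSY}$, combined with $v = D\cdot\overline{D}$ from equation (2.3), and your singular-value argument is simply that linear algebra written out explicitly. Your deduction of finite volume by integrating $\dVol_{||\cdot||} \leq k_1^{-(6g-6)}\dVol_{||\cdot||_{LSY}}$ is likewise exactly the paper's concluding step.
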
 

\begin{proof}
Since $k_1||\cdot||\leq   ||\cdot||_{LSY}$, linear algebra gives that
\begin{equation}\label{4-1-1-2}
 k_1^{6g-6} \dVol_{||\cdot||} \leq \dVol_{ ||\cdot||_{LSY}}.
\end{equation}
The conclusion follows from equation (\ref{2-3}) and inequality (\ref{4-1-1-2}).

Since $(M, ||\cdot||_{LSY})$  has finite volume, inequality (\ref{4-1-1-2}) tells that $(M, ||\cdot||)$ also has finite volume.

\end{proof}

\begin{lemma}\label{4-2}
$(M, ||\cdot||)$ is complete.
\end{lemma}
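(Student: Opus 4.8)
The plan is to prove Lemma \ref{4-2} by leveraging the completeness of the perturbed Ricci metric $||\cdot||_{LSY}$ together with the two-sided comparison (\ref{eq-4-1}). The key observation is that the hypothesis $k_1||\cdot|| \leq ||\cdot||_{LSY} \leq k_2||\cdot||$ is a bi-Lipschitz equivalence of the two Hermitian metrics, and bi-Lipschitz equivalence preserves metric completeness. Since Theorem \ref{lsy-05} (via Liu-Sun-Yau) tells us that $||\cdot||_{LSY}$ is a complete Kähler metric on $M$, completeness of $(M,||\cdot||)$ should follow directly.

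\medskip

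\textbf{Key steps.} First I would recall that a length metric induced by a norm $||\cdot||$ on the tangent bundle gives rise to a distance function $d(p,q) = \inf_\gamma \int ||\dot\gamma||\,dt$, where the infimum runs over paths $\gamma$ joining $p$ to $q$. Second, from the pointwise inequality $||\cdot|| \leq \frac{1}{k_1}||\cdot||_{LSY}$, I would integrate along an arbitrary path to conclude that the induced distance functions satisfy $d_{||\cdot||}(p,q) \leq \frac{1}{k_1} d_{||\cdot||_{LSY}}(p,q)$, and symmetrically $d_{||\cdot||_{LSY}}(p,q) \leq k_2\, d_{||\cdot||}(p,q)$. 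Together these give
\begin{equation}\nonumber
\frac{1}{k_2}\, d_{||\cdot||_{LSY}}(p,q) \leq d_{||\cdot||}(p,q) \leq \frac{1}{k_1}\, d_{||\cdot||_{LSY}}(p,q),
\end{equation}
so the two distance functions are comparable up to uniform constants. Third, I would invoke the standard fact that two metric spaces whose distance functions are Lipschitz-comparable have the same Cauchy sequences; hence one is complete if and only if the other is. Since $(M, ||\cdot||_{LSY})$ is complete by Theorem \ref{lsy-05}, the space $(M,||\cdot||)$ is complete as well.

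\medskip

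\textbf{Expected main obstacle.} The argument is essentially soft and I expect no serious obstruction, but the one point requiring a little care is the passage from the pointwise norm comparison on tangent vectors to the comparison of the induced \emph{distance functions}. For genuine Riemannian or Hermitian metrics this is routine, but one should be mild\-ly attentive because $||\cdot||$ is only assumed Hermitian (not necessarily Kähler), so I would phrase the length-structure argument purely in terms of the associated Riemannian metric $\Re\langle\cdot,\cdot\rangle$ underlying the Hermitian norm, for which the triangle inequality and the path-length definition of distance behave exactly as usual. With that understood, the inequalities above hold verbatim and completeness transfers immediately from $||\cdot||_{LSY}$ to $||\cdot||$.
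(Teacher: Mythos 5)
Your proof is correct and takes essentially the same route as the paper, whose one-line argument is that $||\cdot||_T \leq k_2||\cdot||$ together with completeness of the Teichm\"uller metric immediately forces completeness of $(M,||\cdot||)$; you run the identical comparison argument through $||\cdot||_{LSY}$ via the two-sided bound (\ref{eq-4-1}), which works equally well since the metrics are equivalent. One small correction: completeness of $||\cdot||_{LSY}$ is not the content of Theorem \ref{lsy-05} (which gives the Ricci curvature pinching); it follows instead from Trapani's completeness of the Ricci metric together with $\omega_{LSY}=\omega_{\tau}+C\cdot\omega_{WP}\geq \omega_{\tau}$, or from Theorem \ref{lsy-04} combined with completeness of the McMullen metric.
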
 
\begin{proof}
It directly follows from our assumption that $||\cdot||_T \leq k_2 ||\cdot||$ and the fact that the Teichm\"uller metric is complete. 
\end{proof}

Now we are ready to prove Theorem \ref{mt-2}. 

\begin{proof}[Proof of Theorem \ref{mt-2}]
Theorem \ref{lsy-05} tells that there exists a constant $C_2>0$ such that 
\begin{equation}\label{ubfr}
Ric_{||\cdot||_{LSY}}\leq -C_2<0.
\end{equation}

We let $k_3:= \inf_{p \in (M, ||\cdot||)} \Sca(p) $.
From Lemma \ref{3-1} we know that $v>0$. Thus, from Theorem \ref{chern-1} we have
\begin{eqnarray*}\label{4-1-1}
\Delta v &\geq& 2 v \cdot k_3    -2v (\sum_{1\leq \alpha, \beta, k \leq n } a_{\alpha k} \overline{a}_{\beta k} \tilde{Ric}_{a\overline{\beta}})\\
&\geq&  2 v( k_3+C_2 (3g-3) v^{\frac{1}{3g-3}}) \quad \text{(by the AM-GM inequality)}.
\end{eqnarray*}
From Lemma \ref{4-1} we have

\begin{eqnarray}\label{4-1-2}
\Delta v &\geq& 2 k_1^{6g-6} \cdot (k_3 +C_2 (3g-3) k_1^2).
\end{eqnarray}

We choose $K(k_1,g)= \frac{C_2 (3g-3) k_1^2}{2}>0$. \\

\textsl{Claim : $\inf_{p \in (M, ||\cdot||)} \Sca(p) \leq -K(k_1,g)<0$.}  

\begin{proof}[Proof of Claim] Assume it is not. That is 
\begin{eqnarray}\label{4-1-3}
\inf_{p \in (M, ||\cdot||)} \Sca(p) \geq -K(k_1,g).
\end{eqnarray}

Inequalities (\ref{4-1-2}) and (\ref{4-1-3}) lead to

\begin{eqnarray}\label{4-1-4}
\Delta v \geq  C_2 \cdot (3g-3)\cdot k_1^{6g-4} >0.
\end{eqnarray}

There are more information coming from the conditions of Theorem \ref{mt-2}. We provide two different methods to finish the proof of the claim.

\begin{proof}[Method (1).]
Since $(M,||\cdot||)$ is complete and has finite volume, it follows from inequality (\ref{4-1-4}) and the same argument as in the \textsl{proof of Claim (2)} in section \ref{sec-3} that $v$ is a constant which contradicts inequality (\ref{4-1-4}). 
\end{proof}

\begin{proof}[Method (2)]
First from the right side of inequality (\ref{eq-4-1}) and by using a same argument in the proof of Lemma \ref{4-1} there exists a positive constant $C_3$ such that 
\begin{eqnarray}\label{4-1-5}
\sup_{p\in (M,||\cdot||)}v(p)\leq C_3.
\end{eqnarray}

Fix $p_0 \in M$ and let $B(p_0,r)$ be the closed geodesic ball of $(M, ||\cdot||)$ centered at $p_0$ of radius $r$. Then for any $t>0$ there exists a bump function $f(x):(M, ||\cdot||) \rightarrow [0,\infty)$ which is a Lipschitz continuous function and a constant $C_4>0$ such that 

(i). $f\equiv 1$ on $B(p_0, t)$ and $f \equiv 0$ on $M-B(p_0,2t)$.

(ii). $||\grad f||\leq \frac{C_4}{t}$ a.e. on $M$.  

(For the existence of such bump functions one can refer to \cite{Yau76}).

First since $v>0$ and $\Delta v>0$ we have 

\begin{equation}\label{3-2-2}
\Delta v^2 \geq 2 ||\grad v||^2.
\end{equation}

Let $<,>$ be the Riemannian inner product associated to the metric $||\cdot||$. The Stokes' theorem and inequality (\ref{3-2-2}) give 

\begin{eqnarray*}
0&=&\int_{B(p_0,2t)}div (f^2 \grad (v^2))\\
&\geq& 4 \int_{B(p_0,2t)}f \cdot v \cdot <\grad f, \grad v>+ 2\int_{B(p_0,2t)}f^2 \cdot ||\grad v||^2.
\end{eqnarray*}

The Cauchy-Schwarz inequality leads to

\begin{eqnarray*}
\int_{B(p_0,2t)}f^2 \cdot  ||\grad v||^2 &\leq& -2 \int_{B(p_0,2t)}f \cdot v \cdot <\grad f, \grad v>\\ 
&\leq& 2\sqrt {\int_{B(p_0,2t)}f^2 \cdot ||\grad v||^2} \cdot \sqrt {\int_{B(p_0,2t)}v^2 \cdot ||\grad f||^2}.
\end{eqnarray*}

That is
\begin{eqnarray}\label{3-3}
\int_{B(p_0,2t)}f^2 \cdot  ||\grad v||^2 
\leq 4\int_{B(p_0,2t)}v^2\cdot ||\grad f||^2.
\end{eqnarray}

Since $f\equiv 1$ on $B(p_0, t)$ and $||\grad f||\leq \frac{C_4}{t}$ a.e. on $M$, we replace inequality (\ref{3-3}) by

\begin{eqnarray}
\int_{B(p_0,t)} ||\grad v||^2 
\leq \frac{4 C_4^2}{t^2} \int_{B(p_0,2t)}v^2 .
\end{eqnarray}

By inequality (\ref{4-1-5}),

\begin{eqnarray}
\int_{B(p_0,t)} ||\grad v||^2 
\leq \frac{4 C_3^2 C_4^2}{t^2} \Vol(B(p_0, 2t)).
\end{eqnarray}

Since we assume that $(M,||\cdot||)$ has finite volume, there exists a constant $C_5>0$ such that 

\begin{eqnarray}\label{3-2}
\int_{B(p_0,t)} ||\grad v||^2
\leq \frac{C_5}{t^2}.
\end{eqnarray}

Since $(M,||\cdot||)$ is complete and open, we let $t\to \infty$, it follows from inequality (\ref{3-2}) that $\grad v\equiv 0$ on $M$. That is, $v$ is a constant on $M$ which contradicts inequality (\ref{4-1-4}).
\end{proof}
For the second method above, we use the same argument as in \cite{Yau76}.
\end{proof}

It is clear that the conclusion follows from the claim.
\end{proof}

\section{Proof of Theorem \ref{mt-3}}\label{sec-5}
Recall a Hermitian manifold $(M,<,>)$ is \textsl{almost} if there exists an almost complete structure $\mathbb{J}$ on $M$ such that 
$$<\mathbb{J}\circ V, \mathbb{J} \circ W>=<V,W>$$
for all tangent vectors $V$ and $W$. It is clear that a K\"ahler manifold is almost Hermitian. 

If we assume that both $M$ and $N$ in subsection \ref{2.3} are almost Hermitian, Goldberg and Har\'el in \cite{GH79} proved that the term $\sum_{k=1}^{n}D_k \cdot \overline{D_k}$ in Chern's formula (see Theorem \ref{chern-1}) satisfies 
$$\sum_{k=1}^{n}D_k \cdot \overline{D_k}=\frac{||\grad v||^2}{4v}.$$ 

Thus, a direct computation for Theorem \ref{chern-1}  gives that
\begin{equation}\label{chern-2}
\Delta \log(v)=2\cdot(\Sca-\sum_{1\leq \alpha, \beta, k \leq n } a_{\alpha k} \overline{a}_{\beta k} \tilde{Ric}_{a\overline{\beta}}).
\end{equation}
(One can see formula (10) in \cite{GH79} for details.)

Before we prove Theorem \ref{mt-3} let us recall a theorem of S. T. Yau (see Theorem 1 in \cite{Yau76}) which is crucial for this section.

Let $N$ be a complete Riemannian manifold and $\Delta$ be the Laplace operator of $N$. Assume that $u$ and $h$ are two functions on $M$ satisfying the following equation
\begin{equation}\label{4-3-1}
\Delta \log u=h.
\end{equation}
Then the following theorem says that

\begin{theorem}[Yau]\label{yau-2}
 Suppose $h$ is bounded from below by a constant and $0<\int_{N}h(p)\dVol(p)\leq \infty$. Then $\int_{N}u^n(p)\dVol(p)=\infty$ for $n>0$, unless $u$ is a constant function.
\end{theorem}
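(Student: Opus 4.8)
The plan is to prove the contrapositive in the sharp form: if $\int_N u^n\,\dVol < \infty$ for some $n>0$, then $\int_N h\,\dVol \le 0$. Since this contradicts the hypothesis $0<\int_N h$, the integral $\int_N u^n$ must be infinite for every $n>0$. Note that the constant case is automatically excluded under the stated hypotheses: a constant $u$ forces $h=\Delta\log u\equiv 0$, hence $\int_N h=0$, contradicting $\int_N h>0$; so I only need to produce the infinite integral. Throughout I write $w=\log u$, so that $\Delta w=h$, $u^s=e^{sw}$ and $\grad w=\grad u/u$.

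\textbf{The key weighted estimate.} Fix $s\in(0,n]$ with $\int_N u^s<\infty$. I would run a Caccioppoli-type argument using only first-order cutoffs, which exist on any complete manifold without curvature assumptions: taking $\phi_t(x)=\max\{0,\min\{1,2-\dist(x,p_0)/t\}\}$ gives a Lipschitz function with $\phi_t\equiv 1$ on $B_t(p_0)$, $\phi_t\equiv 0$ off $B_{2t}(p_0)$, and $\|\grad\phi_t\|\le 1/t$ a.e. Since $\phi_t^2u^s\grad w$ is compactly supported, Stokes' theorem applied to its divergence yields
\[
\int_N \phi_t^2 u^s h + s\int_N \phi_t^2 u^s\|\grad w\|^2 = -2\int_N \phi_t u^s\langle\grad\phi_t,\grad w\rangle .
\]
Applying Young's inequality $2ab\le \tfrac{s}{2}a^2+\tfrac{2}{s}b^2$ with $a=\phi_t u^{s/2}\|\grad w\|$ and $b=u^{s/2}\|\grad\phi_t\|$ absorbs half of the gradient term and leaves
\[
\int_N \phi_t^2 u^s h + \frac{s}{2}\int_N \phi_t^2 u^s\|\grad w\|^2 \le \frac{2}{s}\int_N u^s\|\grad\phi_t\|^2 \le \frac{2}{st^2}\int_{B_{2t}\setminus B_t} u^s .
\]
Because $\int_N u^s<\infty$, the annular tail tends to $0$ as $t\to\infty$; letting $t\to\infty$ (monotone convergence on the nonnegative gradient term, and $h\ge -C$ to split $h=(h+C)-C$ and control the sign of $\int u^s h$) gives the key inequality
\[
\int_N u^s h + \frac{s}{2}\int_N u^s\|\grad w\|^2 \le 0, \qquad\text{hence}\qquad \int_N u^s h \le 0 .
\]

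\textbf{Passage to the limit $s\to 0^+$.} The second step recovers the unweighted integral. In the finite-volume case, which is the one relevant to Theorem \ref{mt-3}, one has $u^s\le 1+u^n$, so $\int_N u^s\le \Vol(N)+\int_N u^n<\infty$ for every $s\in(0,n]$ and the key inequality $\int_N u^s h\le 0$ holds for all such $s$. Now $u^s h\to h$ pointwise as $s\to 0^+$, and $u^s h\ge -Cu^s\ge -C(1+u^n)$, where $1+u^n$ is integrable; Fatou's lemma then gives
\[
\int_N h\,\dVol \le \liminf_{s\to 0^+}\int_N u^s h\,\dVol \le 0,
\]
contradicting $\int_N h>0$. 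Here the hypothesis $h\ge -C$ is exactly what supplies the one-sided integrable minorant needed for Fatou, and the conclusion remains valid when $\int_N h=+\infty$. This completes the proof whenever $N$ has finite volume.

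\textbf{The main obstacle.} The delicate point is that the limiting argument needs the key inequality for a sequence of exponents $s\to 0$, i.e. $\int_N u^s<\infty$ for arbitrarily small $s$. On a general complete $N$ of infinite volume with exponential volume growth this can fail: $\int_N u^s$ may be finite only for large $s$, so one cannot naively send $s\to 0$. Handling this regime is the genuine content of Yau's theorem: one must play $h\ge -C$ and $\int_N h>0$ off against the volume growth of $N$ to show that $\int_N u^{n}<\infty$ is itself incompatible with positive total $h$, rather than deducing it by passing to small powers. I expect this interplay between the Caccioppoli estimate and the growth of $\Vol(B_t(p_0))$ to be the crux of the general statement; for the finite-volume application needed in this paper, however, the two displayed steps already give a complete proof.
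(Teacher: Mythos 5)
The paper does not actually prove this statement: it is imported verbatim as Theorem 1 of Yau's paper \cite{Yau76} and used as a black box in Section \ref{sec-5}, so there is no in-house proof to compare against; your attempt must therefore be judged against the quoted statement itself. The two steps you do carry out are correct. The divergence identity $\Div(\phi_t^2u^s\grad w)=\phi_t^2u^sh+2\phi_t u^s\langle\grad\phi_t,\grad w\rangle+s\phi_t^2u^s\|\grad w\|^2$ integrates to zero by Stokes (the Lipschitz cutoff is standard), the Young absorption with weights $s/2$ and $2/s$ is right, and your limit handling is careful where it needs to be: monotone convergence on the nonnegative gradient term, the splitting $h=(h+C)-C$ so that $\int_N u^sh$ is well defined in $(-\infty,+\infty]$ before passing $t\to\infty$, and Fatou with the integrable minorant $-C(1+u^n)$ as $s\to 0^+$. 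Your remark that the ``unless $u$ is constant'' clause is vacuous under the hypothesis $\int_N h>0$ (constant $u$ forces $h\equiv 0$) is also correct.

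The genuine issue is the one you flag yourself: your argument proves the theorem only when $\int_N u^s<\infty$ for arbitrarily small $s>0$, in particular when $\Vol(N)<\infty$, whereas the statement as quoted is for an arbitrary complete $N$ with no volume hypothesis. The failure is concrete: on an infinite-volume $N$ the bound $u^s\leq 1+u^n$ has a non-integrable right-hand side, so $\int_N u^n<\infty$ gives no control of $\int_N u^s$ for small $s$, and the passage from the weighted inequality $\int_N u^sh\leq 0$ to the unweighted conclusion $\int_N h\leq 0$ breaks down; handling that regime is the substantive content of Yau's theorem and requires an idea your proposal does not supply. So, read as a proof of Theorem \ref{yau-2}, the attempt has a real (honestly self-identified) gap. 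That said, for the only use the paper makes of the theorem --- the proof of Theorem \ref{mt-3}, where $N=(M,\|\cdot\|)$ has finite volume because $k_1^{6g-6}\leq v\leq k_2^{6g-6}$ and $(M,\|\cdot\|_{LSY})$ has finite volume --- your finite-volume case is fully sufficient, and in that restricted setting your proof is complete and self-contained, which is more than the paper itself provides.
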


Let $M$ be a finite cover of the moduli space and $|\cdot||$ be an almost Hermitian metric with $||\cdot||\approx ||\cdot||_T$. We use the same notations here as in section \ref{sec-3}. In our setting we let 
\begin{equation*}
N=(M,||\cdot||), \quad u=v
\end{equation*}

and

$$h=2\cdot(\Sca-\sum_{1\leq \alpha, \beta, k \leq (3g-3) } a_{\alpha k} \overline{a}_{\beta k} \tilde{Ric}_{a\overline{\beta}}).$$

Since the perturbed Ricci metric $||\cdot||_{LSY}$ is K\"ahler and $(M,||\cdot||)$ is almost Hermitian, formula (\ref{chern-2}) exactly tells us that

\begin{equation}\label{4-3-2}
\Delta \log v=h.
\end{equation}

Now we are ready to prove Theorem \ref{mt-3}.

\begin{proof}[Proof of Theorem \ref{mt-3}]
First since $||\cdot||\approx ||\cdot||_T$, From Theorem \ref{McM-00} and Theorem \ref{lsy-04}, we may assume that
\begin{equation}\label{eq-4-3-1}
k_1 ||\cdot|| \leq ||\cdot||_{LSY} \leq k_2 ||\cdot|| .
\end{equation}

The proof of Lemma \ref{4-1} gives that 

\begin{equation}\label{eq-4-3-2}
k_1^{6g-6}\leq v \leq k_2^{6g-6}.
\end{equation}

By using a similar argument in the previous proofs, from Theorem \ref{lsy-05} we have, for any $p\in (M,||\cdot||)$,

\begin{eqnarray}\label{4-3-3}
h(p)&=&2\cdot(\Sca(p)-\sum_{1\leq \alpha, \beta, k \leq (3g-3) } a_{\alpha k}(p) \overline{a}_{\beta k}(p) \tilde{Ric}_{a\overline{\beta}}(p)) \nonumber \\
&\geq& 2 \cdot \inf_{p \in (M,||\cdot||))}\Sca(p)  + 2C_2 (\sum_{1\leq \alpha, \beta\leq (3g-3) } |a_{\alpha \beta}(p)|^2)  \nonumber \\
&\geq&  2 \cdot \inf_{p \in (M,||\cdot||))}\Sca(p) +2C_2 (3g-3) v^{\frac{1}{3g-3}} \quad \text{(by the AM-GM inequality)} \nonumber \\
&>& 2\cdot \inf_{p \in (M,||\cdot||))}\Sca(p) \nonumber\\
&>&-\infty\nonumber 
\end{eqnarray}
where we apply the assumption that the scalar curvature of $(M,||\cdot||)$ is bounded from below for the last step. That is, $h$ is bounded from below on  $(M,||\cdot||)$.

We finish the proof through the following two cases.\\

\textsl{Case (1). $v$ is a constant on $(M,||\cdot||)$.}

If $v$ is a constant, by equation (\ref{4-3-2}) we have $h \equiv 0$. That is,

\begin{equation}\label{4-3-5}
\Sca=\sum_{1\leq \alpha, \beta, k \leq n } a_{\alpha k} \overline{a}_{\beta k} \tilde{Ric}_{a\overline{\beta}}.
\end{equation}

It follows from Theorem \ref{lsy-05} and equation (\ref{4-3-5}) that 

\begin{equation}\label{4-3-6}
\Sca(p)<0 \quad \forall p \in (M,||\cdot||).
\end{equation}

It is clear that the total mass

$$\int_{p \in (M,||\cdot||)} \Sca(p)\dVol(p)<0.$$\\

\textsl{Case (2). $v$ is not a constant on $(M,||\cdot||)$.}

Since  $(M,||\cdot||_{LSY})$ has finite volume, from inequality (\ref{eq-4-3-2}) we have

\begin{equation}\label{eq-4-3-4}
 \int_{p \in (M,||\cdot||)}v^n(p)\dVol(p)<\infty \quad \forall n>0.
\end{equation}

We already show that $h$ is bounded from below on $(M,||\cdot||)$. Thus, from Theorem \ref{yau-2} we know that

\begin{equation}\label{4-3-7}
\int_{p \in (M,||\cdot||)} h(p)\dVol(p)\leq 0.
\end{equation}

That is,

\begin{eqnarray*}
\int_{p \in (M,||\cdot||)} \Sca(p)\dVol(p)\leq \int_{p \in (M,||\cdot||)}\sum_{1\leq \alpha, \beta, k \leq (3g-3)}a_{\alpha k}(p) \overline{a}_{\beta k}(p) \tilde{Ric}_{a\overline{\beta}}(p)\dVol(p).
\end{eqnarray*}

By Theorem \ref{lsy-05} and the AM-GM inequality we have

\begin{eqnarray*}
\int_{p \in (M,||\cdot||)} \Sca(p)\dVol(p) &\leq& -\int_{p \in (M,||\cdot||)} C_2 (\sum_{1\leq \alpha, \beta\leq (3g-3) } |a_{\alpha \beta}(p)|^2)\dVol(p)\\
&\leq & -\int_{p \in (M,||\cdot||)}C_2 (3g-3) v(p)^{\frac{1}{3g-3}} \dVol(p)\\
&<&0.
\end{eqnarray*}
Where we apply the fact that $v>0$ for the last step.

\end{proof}


\section{acknowledgement}
The author would like to thank Kefeng Liu, Michael Wolf and Scott Wolpert for their interests. He deeply thank Benson Farb for his encouragement for writing this article. This work was partially completed during a reading seminar at Rice university, organized by Michael Wolf and Robert Hardt. The author would like to thanks for their consistent help. He also would like to acknowledge support from U.S. National Science Foundation grants DMS 1107452, 1107263, 1107367 ``RNMS: Geometric structures And Representation varieties"(the GEAR Network).

\end{document}